\newcommand{\bq}{\begin{equation}}
\newcommand{\eq}{\end{equation}}
\newcommand{\R}{{ \mathbb{R}  }}
\begin{document}
\bibliographystyle{plain}

\newtheorem{defn}{Definition}
\newtheorem{lemma}[defn]{Lemma}
\newtheorem{proposition}{Proposition}
\newtheorem{theorem}[defn]{Theorem}
\newtheorem{cor}{Corollary}
\newtheorem{remark}{Remark}
\numberwithin{equation}{section}

\def\Xint#1{\mathchoice
   {\XXint\displaystyle\textstyle{#1}}%
   {\XXint\textstyle\scriptstyle{#1}}%
   {\XXint\scriptstyle\scriptscriptstyle{#1}}%
   {\XXint\scriptscriptstyle\scriptscriptstyle{#1}}%
   \!\int}
\def\XXint#1#2#3{{\setbox0=\hbox{$#1{#2#3}{\int}$}
     \vcenter{\hbox{$#2#3$}}\kern-.5\wd0}}
\def\ddashint{\Xint=}
\def\dashint{\Xint-}
\def\aint{\Xint\diagup}

\newenvironment{proof}{{\bf Proof.}}{\hfill\fbox{}\par\vspace{.2cm}}
\newenvironment{pfthm1}{{\par\noindent\bf
            Proof of Theorem 1. }}{\hfill\fbox{}\par\vspace{.2cm}}
\newenvironment{pfthm2}{{\par\noindent\bf
            Proof of Theorem 2. }}{\hfill\fbox{}\par\vspace{.2cm}}
\newenvironment{pfthm3}{{\par\noindent\bf
Proof of Theorem 3. }}{\hfill\fbox{}\par\vspace{.2cm}}
\newenvironment{pfthm4}{{\par\noindent\bf
Proof of Theorem \ref{theorem3}. }}{\hfill\fbox{}\par\vspace{.2cm}}
\newenvironment{pfthm5}{{\par\noindent\bf
Proof of Theorem 5. }}{\hfill\fbox{}\par\vspace{.2cm}}
\newenvironment{pflemsregular}{{\par\noindent\bf
            Proof of Lemma \ref{sregular}. }}{\hfill\fbox{}\par\vspace{.2cm}}

\title{On the blow-up criterion and small data global existence for the Hall-magnetohydrodynamics }
\author{$\mbox{Dongho Chae}^{(*)}\mbox{ and Jihoon Lee}^{(\dagger)}$\\
 Department of Mathematics;\\
 Chung-Ang University\\
 Seoul 156-756; Korea\\
 e-mail : ($*$)dchae@cau.ac.kr,\quad ($\dagger$)jhleepde@cau.ac.kr}

\date{}

\maketitle
\begin{abstract}
  In this paper, we establish an optimal blow-up criterion for classical solutions to the incompressible resistive Hall-magnetohydrodynamic equations. We also prove two  global-in-time existence results of the classical solutions for small initial data, the smallness conditions of which are given by the suitable Sobolev and the Besov norms respectively. Although the Sobolev space version is already an improvement of the corrresponding result in \cite{Chae-Degond-Liu}, the optimality in terms of the scaling property is achieved via the Besov space estimate.
  The special property of the energy estimate in terms of $\dot{B}^s_{2,1}$ norm is essential for this result.
  Contrary to the usual MHD  the global well-posedness in the $2\frac12$ dimensional Hall-MHD is wide open.
  \\
  \newline{\bf 2010 AMS Subject Classification}: 35L60, 35K55, 35Q80.
\newline {\bf Keywords}: Hall-magnetohydrodynamics, blow-up criterion, well-posedness.
\end{abstract}

\section{Introduction}
 \setcounter{equation}{0}

\indent
 We study  the following three dimensional incompressible resistive viscous Hall-magnetohydrodynamics system(Hall-MHD):
\begin{eqnarray}
&& \partial_t u +(u\cdot \nabla) u +\nabla p=(\nabla \times B)\times B+\Delta u, \label{eq1}\\
&& \nabla \cdot u=0, \qquad\qquad\qquad\qquad\qquad\qquad\qquad\qquad\qquad\qquad \mbox{ in } \R^3 \times (0, \infty),\label{eq2}\\
&& \partial_t B -\nabla \times (u \times B)+\nabla \times ((\nabla \times B)\times B)=\Delta B, \label{eq3}\\
&& (u(0, x),\, B(0, x))=(u_0(x),\, B_0(x)), \qquad\qquad\qquad\qquad \quad\mbox{ in }\R^3,\label{eq4}
\end{eqnarray}
 where $u $, $B$ and $p$ represent $3$-dimensional velocity vector field, the magnetic field and scalar pressure, respectively. The initial data $u_0$ and $B_0$ satisfy
\[ \nabla \cdot u_0 =\nabla \cdot B_0=0.
\]
Note that if $\nabla \cdot B_0=0$, then the divergence free condition is propagated by (\ref{eq3}). Comparing with the usual MHD equations, the Hall-MHD equations have the Hall term $\nabla \times((\nabla \times B)\times B)$ in \eqref{eq3}, which plays an important role in magnetic reconnection which is happening in the case of large magnetic shear. The Hall magnetohydrodynamics was studied systematically by Lighthill\cite{Light}. In particular, he considered Alfv\'{e}n waves with Hall effect. The Hall-MHD is important describing many physical phenomena, e.g., space plasmas, star formation, neutron stars and geo-dynamo (see \cite{BT, For, HG, MGM, SU, War} and references therein).
 \\
In \cite{ADFL}, Acheritogaray, Degond, Frouvelle and Liu derived the Hall-MHD equations from either two fluids model or kinetic models in a mathematically rigorous way. In \cite{Chae-Degond-Liu}, the global existence of weak solutions to \eqref{eq1}--\eqref{eq4} as well as the local well-posedness of classical solution are established. Also, a blow-up criterion for smooth solution to \eqref{eq1}--\eqref{eq4} and the global existence of smooth solution for small initial data are obtained (see \cite[Theorem 2.2 and 2.3]{Chae-Degond-Liu}). Very recently, temporal decay for the weak solution and smooth solution with small data to Hall-MHD are also established in \cite{Chae-Schon}.\\
Our goal of this paper is to improve in an optimal way the blow-up criterion and global existence of smooth solution with small initial data to the Hall-MHD equations \eqref{eq1}--\eqref{eq4} derived in \cite{Chae-Degond-Liu}. The sense of optimality is explained in detail in Remark \ref{rem1}.\\
Using vector identity, we can rewrite \eqref{eq1}--\eqref{eq4} as follows:
\begin{eqnarray}
&& \partial_t u +(u\cdot \nabla) u +\nabla \left(p+\frac{|B|^2}{2} \right)=(B\cdot \nabla)B+\Delta u, \label{eq1-1}\\
&& \nabla \cdot u=0, \qquad\qquad\qquad\qquad\qquad\qquad\qquad\qquad\qquad\qquad \mbox{ in } \R^3\times (0, \infty),\label{eq2-1}\\
&& \partial_t B +(u\cdot\nabla )B+\nabla \times ((\nabla \times B)\times B)=(B\cdot \nabla)u+\Delta B, \label{eq3-1}\\
&& (u(0, x),\, B(0, x))=(u_0(x),\, B_0(x)), \qquad\qquad\qquad\qquad \quad\mbox{ in }\R^3.\label{eq4-1}
\end{eqnarray}
Note that a weak solution $(u, B)$ to \eqref{eq1}--\eqref{eq4} satisfies the following energy inequality (see \cite{Chae-Degond-Liu}):
\begin{equation}\label{ineq-energy}
\frac12(\| u(t) \|_{L^2}^2 +\| B(t)\|_{L^2}^2) +\int_0^t \| \nabla u(\cdot, \tau)\|_{L^2}^2 +\| \nabla B(\cdot, \tau)\|_{L^2}^2 d \tau \leq \frac12(\| u_0\|_{L^2}^2+\| B_0 \|_{L^2}^2)
\end{equation}
for almost every $t \in [0, \infty)$.\\
Our first result is Serrin type\cite{Ser} criterion for the solutions to \eqref{eq1}--\eqref{eq4}.
\begin{theorem}\label{thm1}
Let $m >\frac52$ be an integer and $( u_0,\, B_0) \in H^{m}({\mathbb{R}}^3)$ with $\nabla \cdot u_0=\nabla\cdot B_0=0$. Then for the first blow-up time $T^{*}<\infty$ of the calssical solution to \eqref{eq1}--\eqref{eq4}, it holds that  \[
\limsup_{t \nearrow T^{*}} (\| u(t)\|_{H^m}^2+\| B(t)\|_{H^m}^2)=\infty,
\]
if and only if
\[
\| u\|_{L^q(0, T^{*}; L^p(\R^3))}+\| \nabla B \|_{L^{\gamma}(0, T^{*}; L^{\beta}(\R^3))}=\infty,
\]
where $p$, $q$, $\beta$ and $\gamma$ satisfy the relation
\[
\frac{3}{p}+\frac{2}{q} \leq 1, \,\, \frac{3}{\beta}+\frac{2}{\gamma}\leq 1,\mbox{ and } p,\, \beta \in (3, \infty].
\]
\end{theorem}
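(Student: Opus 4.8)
I reduce the \emph{iff} to a continuation principle and focus on the substantive implication. The elementary direction is this: if $\limsup_{t\nearrow T^*}(\|u\|_{H^m}^2+\|B\|_{H^m}^2)<\infty$, then since $m>\tfrac52$ the embedding $H^m\hookrightarrow W^{1,\infty}$ together with interpolation against the energy bound \eqref{ineq-energy} places $u$ and $\nabla B$ in $L^\infty(0,T^*;L^r)$ for every $r\in[2,\infty]$, so both space-time norms are finite on the bounded interval. Thus it suffices to prove the contrapositive of the other direction: \emph{if the Serrin-type norm is finite on $[0,T^*)$, then $\|u\|_{H^m}^2+\|B\|_{H^m}^2$ stays bounded}, which by the local existence theory contradicts that $T^*$ is a blow-up time.

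For this, I apply $\partial^\alpha$ with $|\alpha|\le m$ to \eqref{eq1-1} and \eqref{eq3-1}, pair with $\partial^\alpha u$ and $\partial^\alpha B$ in $L^2$, and sum. The pressure drops by $\nabla\cdot u=0$, and the dissipation yields the good term on the left:
\begin{equation}
\tfrac12\tfrac{d}{dt}\bigl(\|u\|_{H^m}^2+\|B\|_{H^m}^2\bigr)+\|\nabla u\|_{H^m}^2+\|\nabla B\|_{H^m}^2\le |I|+|II|+|III|,
\end{equation}
where $I$ gathers the advection terms $u\cdot\nabla u$ and $u\cdot\nabla B$, $II$ the coupling terms $B\cdot\nabla B$ and $B\cdot\nabla u$, and $III$ the Hall term. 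The plan is to dominate each by $\tfrac12(\|\nabla u\|_{H^m}^2+\|\nabla B\|_{H^m}^2)+C(t)(\|u\|_{H^m}^2+\|B\|_{H^m}^2)$ with $C\in L^1(0,T^*)$, and close by Gr\"onwall. The guiding principle, used repeatedly, is that the divergence-free conditions $\nabla\cdot u=\nabla\cdot B=0$ allow one to integrate by parts so that in every term the lowest-order factor is $u$ or $\nabla B$ (never $\nabla u$ nor an undifferentiated $B$), exactly matching the controlled norms; the top-order factor is then absorbed by dissipation.

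For $I$ I write $u\cdot\nabla u=\nabla\cdot(u\otimes u)$, $u\cdot\nabla B=\nabla\cdot(u\otimes B)$ and integrate by parts, which puts $u$ on the lowest factor in the extremal term. H\"older with exponents $(p,\tfrac{2p}{p-2},2)$, the Gagliardo--Nirenberg bound $\|\nabla^m u\|_{L^{2p/(p-2)}}\lesssim\|\nabla^m u\|_{L^2}^{1-3/p}\|\nabla^{m+1}u\|_{L^2}^{3/p}$, and Young's inequality control this extremal term by $\tfrac18\|\nabla^{m+1}u\|_{L^2}^2+C\|u\|_{L^p}^{2p/(p-3)}\|u\|_{H^m}^2$; since $\tfrac3p+\tfrac2q\le1$ forces $q\ge\tfrac{2p}{p-3}$, the coefficient $\|u\|_{L^p}^{2p/(p-3)}$ lies in $L^1(0,T^*)$. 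In $II$ the top-order parts of $B\cdot\nabla B$ and $B\cdot\nabla u$ cancel upon adding the two pairings and using $\nabla\cdot B=0$; each surviving remainder, after a further integration by parts exploiting $\nabla\cdot u=\nabla\cdot B=0$ to relocate derivatives, carries a factor $u$ or $\nabla B$ and is handled exactly as above, with $\|\nabla B\|_{L^\beta}$ in the role of $\|u\|_{L^p}$. All interior terms are subcritical and require strictly smaller powers of the same norms.

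The Hall term $III=\sum_{|\alpha|\le m}\langle\partial^\alpha\nabla\times((\nabla\times B)\times B),\partial^\alpha B\rangle$ is the crux, being second order in $B$. Writing $J=\nabla\times B$, using self-adjointness of the curl under the $L^2$ pairing, and $\nabla\times\partial^\alpha B=\partial^\alpha J$, each summand becomes $\langle\partial^\alpha(J\times B),\partial^\alpha J\rangle$. By Leibniz the genuinely top-order piece is $\langle(\partial^\alpha J)\times B,\partial^\alpha J\rangle$, which vanishes identically by antisymmetry of the cross product; this is the structural cancellation that avoids a derivative loss and, crucially, removes any need to control $\|B\|_{L^\infty}$. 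The borderline survivor is $\langle\partial^\beta J\times\nabla B,\partial^\alpha J\rangle$ with $|\beta|=m-1$, of schematic form $\int\nabla^m B\,\nabla B\,\nabla^{m+1}B$; H\"older, Gagliardo--Nirenberg and Young bound it by $\tfrac18\|\nabla^{m+1}B\|_{L^2}^2+C\|\nabla B\|_{L^\beta}^{2\beta/(\beta-3)}\|B\|_{H^m}^2$, and $\tfrac3\beta+\tfrac2\gamma\le1$ gives $\gamma\ge\tfrac{2\beta}{\beta-3}$, so this coefficient is in $L^1(0,T^*)$. Collecting all estimates and applying Gr\"onwall with $C(t)\sim\|u\|_{L^p}^{2p/(p-3)}+\|\nabla B\|_{L^\beta}^{2\beta/(\beta-3)}+1\in L^1(0,T^*)$ bounds $\|u(t)\|_{H^m}^2+\|B(t)\|_{H^m}^2$ up to $T^*$, the desired contradiction. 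I expect the cross-product cancellation in the Hall term to be the only genuinely delicate point; everything else is the standard H\"older--Gagliardo--Nirenberg--Young--Gr\"onwall machinery, with the Serrin exponents emerging precisely from the scaling of the extremal terms.
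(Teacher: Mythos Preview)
Your argument is sound in outline and uses the same key cancellations as the paper (the vanishing of $(\partial^\alpha J)\times B\cdot\partial^\alpha J$ in the Hall term, and of $\int(B\cdot\nabla)\partial^\alpha u\cdot\partial^\alpha B+\int(B\cdot\nabla)\partial^\alpha B\cdot\partial^\alpha u$ in the coupling), but the paper takes a different route to close the $H^m$ estimate. Instead of attacking $H^m$ directly, it first proves an $H^1$ a~priori bound \eqref{apriori1} with Gr\"onwall weight $\|u\|_{L^p}^{2p/(p-3)}+\|\nabla B\|_{L^\beta}^{2\beta/(\beta-3)}$; at that level there are \emph{no} interior Leibniz terms, so the integrations by parts you describe suffice without further interpolation. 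From the resulting bound $\int_0^{T^*}(\|\nabla u\|_{H^1}^2+\|\nabla B\|_{H^1}^2)\,dt<\infty$ and the interpolation $\|f\|_{L^\infty}\le C\|f\|_{L^2}^{1/4}\|\nabla f\|_{H^1}^{3/4}$, the paper obtains $\int_0^{T^*}(\|u\|_{L^\infty}^2+\|B\|_{L^\infty}^2)\,dt<\infty$ and then runs a cruder $H^m$ estimate \eqref{apriori2} with $L^\infty$-based product bounds as Gr\"onwall weight. Your one-step approach avoids this bootstrap but must then handle the interior Leibniz terms in $I$ and $II$ at general $m$: the claim that these ``carry a factor $u$ or $\nabla B$'' after integration by parts is not literally true for $2\le|\gamma|\le m-1$ --- one needs Gagliardo--Nirenberg interpolation on both middle factors, and the resulting power of $\|u\|_{L^p}$ or $\|\nabla B\|_{L^\beta}$ matches the extremal one rather than being strictly smaller. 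Both routes work; the paper's buys simplicity at the $H^m$ stage at the cost of an extra $H^1$ step, while yours is more uniform but requires the interior interpolation to be spelled out.
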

Next, we consider an improvement of Theorem \ref{thm1} for the case $q=\gamma=2$ and $p=\beta=\infty$ by using BMO space, in which $L^{\infty}(\R^d)$ is embedded (see \cite{Trieb} for the definition and properties).
\begin{theorem}\label{thm2}
Let $m >\frac52$ be an integer and $( u_0,\, B_0) \in H^{m}({\mathbb{R}}^3)$ with $\nabla \cdot u_0=\nabla \cdot B_0=0$. Then for the first blow-up time $T^{*}<\infty$ of the classical solution to \eqref{eq1}--\eqref{eq4}, it holds that  \[
\limsup_{t \nearrow T^{*}} (\| u(t)\|_{H^m}^2+\| B(t)\|_{H^m}^2)=\infty,
\]
if and only if
\[
\int_0^{T^{*}} (\| u\|_{BMO}^2+\| \nabla B\|_{BMO}^2)dt=\infty.
\]
\end{theorem}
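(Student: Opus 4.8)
The plan is to prove the two implications separately; only one of them carries analytic content. Since $m>\tfrac52$, the embedding $H^m(\R^3)\hookrightarrow L^\infty(\R^3)\hookrightarrow BMO$ gives $\|u\|_{BMO}\lesssim\|u\|_{H^m}$, and likewise $\|\nabla B\|_{BMO}\lesssim\|\nabla B\|_{H^{m-1}}\lesssim\|B\|_{H^m}$ because $m-1>\tfrac32$. Hence if $\sup_{t<T^*}(\|u(t)\|_{H^m}^2+\|B(t)\|_{H^m}^2)<\infty$ then $\int_0^{T^*}(\|u\|_{BMO}^2+\|\nabla B\|_{BMO}^2)\,dt$ is finite, which is one direction in contrapositive form. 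It therefore remains to prove the continuation statement: assuming $\int_0^{T^*}(\|u\|_{BMO}^2+\|\nabla B\|_{BMO}^2)\,dt<\infty$, I will derive a bound on $\|u(t)\|_{H^m}^2+\|B(t)\|_{H^m}^2$ that is uniform on $[0,T^*)$, contradicting blow-up of the $H^m$ norm.

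For the a priori estimate I would work with the form \eqref{eq1-1}--\eqref{eq3-1}, apply $\partial^{\alpha}$ for $|\alpha|\le m$, pair with $\partial^{\alpha}u$ and $\partial^{\alpha}B$, and sum. The dissipative terms produce $-\|u\|_{\dot H^{m+1}}^2-\|B\|_{\dot H^{m+1}}^2$; the pressure gradient drops out because $\nabla\cdot\partial^{\alpha}u=0$; the top-order parts of the transport terms $(u\cdot\nabla)u$ and $(u\cdot\nabla)B$ vanish since $u$ is divergence free; and the top-order parts of the coupling pair $(B\cdot\nabla)B$ and $(B\cdot\nabla)u$ cancel against each other after one integration by parts using $\nabla\cdot B=0$. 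Writing $X=\|u\|_{H^m}^2+\|B\|_{H^m}^2$ and $D=\|u\|_{\dot H^{m+1}}^2+\|B\|_{\dot H^{m+1}}^2$, the whole matter reduces to controlling the surviving commutators together with the Hall term.

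The crux is to estimate the genuinely top-order ``transport type'' terms with a divergence-free advecting field without losing a logarithm. Writing $W=\partial^{\alpha}u$ (divergence free), the worst velocity contribution reduces, after integration by parts, to $-\int u\cdot\bke{(W\cdot\nabla)W}\,dx$. Since each component $W\cdot\nabla W_i=\nabla\cdot(W\,W_i)$ is the product of the divergence-free field $W$ with a gradient, the div--curl lemma of Coifman--Lions--Meyer--Semmes places $(W\cdot\nabla)W$ in the Hardy space $\mathcal H^1$ with $\|(W\cdot\nabla)W\|_{\mathcal H^1}\lesssim\|W\|_{L^2}\|\nabla W\|_{L^2}$, and $\mathcal H^1$--$BMO$ duality then gives a bound $\lesssim\|u\|_{BMO}\|u\|_{\dot H^m}\|u\|_{\dot H^{m+1}}$, with the BMO norm (matching the criterion) and no logarithmic factor. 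The Hall term is handled analogously: moving the outer curl onto $\partial^{\alpha}B$ turns it into $\int\partial^{\alpha}\bke{(\nabla\times B)\times B}\cdot\partial^{\alpha}(\nabla\times B)$, the pointwise identity $(a\times b)\cdot a=0$ kills the most dangerous piece, and the leading remainder equals $-\int(\nabla\times B)\cdot\bke{(\partial^{\alpha}B\cdot\nabla)\partial^{\alpha}B}$, which by the same duality is $\lesssim\|\nabla B\|_{BMO}\|B\|_{\dot H^m}\|B\|_{\dot H^{m+1}}$. The remaining lower-order commutators are controlled by Gagliardo--Nirenberg interpolation. After Young's inequality absorbs every $\dot H^{m+1}$ factor into $\tfrac12 D$, I obtain
\[
\frac{d}{dt}X+ c\,D\le C\bke{\|u\|_{BMO}^2+\|\nabla B\|_{BMO}^2}X,
\]
and Gr\"onwall's inequality with the finite-integral hypothesis closes the argument.

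The main obstacle is precisely the Hall term $\nabla\times((\nabla\times B)\times B)$: it is second order in $B$, hence of the same differential order as the resistive dissipation $\Delta B$, so its top-order part cannot be absorbed as a lower-order perturbation. The two points that make it work are the algebraic cancellation after transferring the curl and, decisively, estimating the surviving top-order term through the $\mathcal H^1$--$BMO$ duality rather than through a logarithmic Sobolev inequality. The latter route (Kozono--Taniuchi) would replace $\|\nabla B\|_{L^\infty}$ by $\|\nabla B\|_{BMO}(1+\log(e+X))$, and because this term carries a factor $\|B\|_{\dot H^{m+1}}=D^{1/2}$, Young's inequality would square the logarithm and leave a differential inequality $\frac{d}{dt}\log(e+X)\lesssim \bke{\|u\|_{BMO}^2+\|\nabla B\|_{BMO}^2}\,(1+\log(e+X))^2$ that fails to prevent blow-up when $\int_0^{T^*}(\|u\|_{BMO}^2+\|\nabla B\|_{BMO}^2)\,dt$ is large. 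Routing the critical terms through the div--curl/Hardy structure is what keeps the estimate logarithm-free and yields an unconditional criterion.
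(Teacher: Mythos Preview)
Your div--curl/$\mathcal H^1$--$BMO$ route is genuinely different from the paper's, and your identities for the extreme commutator pieces (both for the transport terms and for the Hall term) are correct and elegant. Two points, however, need correction.

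First --- and this is the main one --- your dismissal of the logarithmic route is mistaken, and that route is exactly what the paper uses. You argue that replacing $\|\nabla B\|_{L^\infty}$ by $\|\nabla B\|_{BMO}\,(1+\log(e+X))$ leads, after Young's inequality, to $\frac{d}{dt}\log(e+X)\lesssim(\cdots)(1+\log(e+X))^2$, which would indeed fail. But the sharp logarithmic Sobolev inequality the paper invokes (Ogawa) carries a \emph{square root} of the logarithm:
\[
\|\nabla B\|_{L^\infty}\le C\Bigl(1+\|\nabla B\|_{BMO}\,\bigl(\log(e+\|B\|_{H^m})\bigr)^{1/2}\Bigr).
\]
After Young's inequality this produces only a single power of the logarithm, and the paper obtains
\[
\frac{d}{dt}X\le C\bigl(1+\|u\|_{BMO}^2+\|\nabla B\|_{BMO}^2\bigr)\,X\log(e+X),
\]
which integrates to a (doubly exponential but finite) bound under the hypothesis. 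For the non-Hall terms $J_2,\dots,J_5$ the paper uses the Kozono--Taniuchi bilinear estimate $\|\partial^\gamma f\,\partial^\delta g\|_{L^2}\le C(\|f\|_{BMO}\|g\|_{\dot H^{|\gamma|+|\delta|}}+\|g\|_{BMO}\|f\|_{\dot H^{|\gamma|+|\delta|}})$ together with $\|B\|_{BMO}\le C(\|\nabla B\|_{BMO}+\|B\|_{L^2})$; no logarithm enters there at all. So the approach you reject is in fact the paper's proof.

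Second, your own argument has a gap in the Hall commutator. After the cancellation $(\partial^\alpha j\times B)\cdot\partial^\alpha j=0$ you are left with $\sum_{0<\beta\le\alpha}\binom{\alpha}{\beta}\int(\partial^{\alpha-\beta}j\times\partial^\beta B)\cdot\partial^\alpha j$. Your div--curl identity $-\int j\cdot\bigl((\partial^\alpha B\cdot\nabla)\partial^\alpha B\bigr)$ treats only the endpoint $\beta=\alpha$. The opposite endpoint $|\beta|=1$, namely $\int(\partial^{\alpha-e_l}j\times\partial_l B)\cdot\partial^\alpha j$, is equally scaling-critical (one factor at order~$1$, the other two at orders $m$ and $m{+}1$) and, after cycling the triple product to $\int\partial_l B\cdot(\partial_l V\times V)$ with $V=\partial^{\alpha-e_l}j$, does \emph{not} present a $(\text{div-free})\cdot\nabla\phi$ structure. ``Gagliardo--Nirenberg interpolation'' alone will not place this term under $C\|\nabla B\|_{BMO}^2 X$ without a logarithmic loss: the BMO--$L^2$ interpolation $\|\nabla B\|_{L^s}\le C\|\nabla B\|_{BMO}^{1-2/s}\|\nabla B\|_{L^2}^{2/s}$ yields an exponent on $\|\nabla B\|_{BMO}$ strictly larger than~$2$ for every finite~$s$. (For the strictly intermediate range $2\le|\beta|\le m-1$ one can write both factors as derivatives of $\nabla B$ and invoke Kozono--Taniuchi with $f=g=\nabla B$ to get $\|\nabla B\|_{BMO}\|B\|_{\dot H^m}$, but $|\beta|=1$ resists this.) If you can close this endpoint, your log-free estimate would be a genuine sharpening of the paper's bound; as written, the paper's logarithmic argument is the one that actually closes.
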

We consider the homogeneous Sobolev space $\dot{H}^s(\R^d)$ ($s \in \R$), which is defined as follows:\\
For any tempered distribution $f$ on $\R^d$, we define the seminorm
\[
\| f\|_{\dot{H}^s}=\| \Lambda^s f\|_{L^2} =\left( \int_{\R^d} |\xi|^{2s} |\hat{f}(\xi)|^2 d\xi \right)^{\frac12},
\]
where $\Lambda$ denotes $(-\Delta)^{\frac12}$
and $\dot{H}^s(\R^d)$ is the set of all functions $f$, for which $\| f\|_{\dot{H}^s}$ is finite.
Using the above notation, we state the global in time existence of smooth solution with small data.
\begin{theorem}\label{thm3}
Let $m > \frac52$ be an integer and $(u_0, B_0) \in H^m(\R^3)$ with $\nabla\cdot u_0=\nabla \cdot B_0=0$. There exists a constant $K>0$ such that if $\|  u_0 \|_{\dot{H}^{\frac32}}+\|  B_0 \|_{\dot{H}^{\frac32}} <K$, then there exists a unique global classical solution $(u,\, B) \in L^{\infty}(0, \infty; H^m(\R^3))$ to \eqref{eq1}--\eqref{eq4}.
\end{theorem}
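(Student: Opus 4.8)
The plan is to combine the local existence theory with an a priori estimate at the scaling-critical regularity $\dot H^{3/2}$, and then to exclude finite-time blow-up by means of the criterion of Theorem \ref{thm1}. For $H^m$ data the local theory furnishes a unique classical solution on a maximal interval $[0,T^{*})$, so it suffices to bound the $H^m$ norm up to $T^{*}$. Since $\dot H^{3/2}$ is exactly the scaling-invariant space for the magnetic field $B$ (and is subcritical for $u$, whose critical index in $\R^3$ is $\dot H^{1/2}$), the heart of the matter is to show that if $\|u_0\|_{\dot H^{3/2}}+\|B_0\|_{\dot H^{3/2}}$ is small then $X(t):=\|u(t)\|_{\dot H^{3/2}}^2+\|B(t)\|_{\dot H^{3/2}}^2$ cannot grow. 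Concretely I would aim at the differential inequality
\begin{equation}
\tfrac{d}{dt}X(t)+\bke{\|u\|_{\dot H^{5/2}}^2+\|B\|_{\dot H^{5/2}}^2}\le C\,X(t)^{1/2}\bke{\|u\|_{\dot H^{5/2}}^2+\|B\|_{\dot H^{5/2}}^2},
\end{equation}
so that once $C\,X(0)^{1/2}<1$ the dissipation on the left absorbs the right-hand side and $X$ is non-increasing.

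To derive this I would apply $\Lambda^{3/2}$ to \eqref{eq1-1} and \eqref{eq3-1}, pair in $L^2$ with $\Lambda^{3/2}u$ and $\Lambda^{3/2}B$ respectively, and add. The viscous and resistive terms produce the dissipation $\|u\|_{\dot H^{5/2}}^2+\|B\|_{\dot H^{5/2}}^2$; the pressure together with $|B|^2/2$ drops out after integration by parts because $\nabla\cdot u=0$. Using incompressibility, each transport term contributes only a commutator, $\langle[\Lambda^{3/2},u\cdot\nabla]u,\Lambda^{3/2}u\rangle$ and $\langle[\Lambda^{3/2},u\cdot\nabla]B,\Lambda^{3/2}B\rangle$, while the Lorentz force $(B\cdot\nabla)B$ and the stretching term $(B\cdot\nabla)u$ cancel at leading order exactly as in the usual MHD system, again leaving a commutator remainder. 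All of these remainders are controlled by $X^{1/2}$ times the dissipation via homogeneous Kato--Ponce commutator and product estimates, the interpolation $\|f\|_{\dot H^2}\le\|f\|_{\dot H^{3/2}}^{1/2}\|f\|_{\dot H^{5/2}}^{1/2}$, and the embedding $\dot H^1(\R^3)\hookrightarrow L^6$.

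The main obstacle is the Hall term $\nabla\times((\nabla\times B)\times B)$, the only genuinely second-order nonlinearity and the one that fixes the $\dot H^{3/2}$ scaling. Writing $j=\nabla\times B$ and integrating the outer curl by parts, its contribution is $\langle\Lambda^{3/2}(j\times B),\Lambda^{3/2}j\rangle$, where $\|j\|_{\dot H^{3/2}}\simeq\|B\|_{\dot H^{5/2}}$ and $\|j\|_{\dot H^{1/2}}\simeq\|B\|_{\dot H^{3/2}}$. I would bound this by $\|j\times B\|_{\dot H^{3/2}}\,\|B\|_{\dot H^{5/2}}$ and then estimate $\|j\times B\|_{\dot H^{3/2}}\lesssim\|B\|_{\dot H^{3/2}}\|B\|_{\dot H^{5/2}}$ by a homogeneous product estimate. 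This is the delicate step: in three dimensions $\dot H^{3/2}$ is the borderline Sobolev space that just fails to embed into $L^\infty$, so the naive Leibniz splitting produces an uncontrolled $\|B\|_{L^\infty}$ or $\|\nabla B\|_{L^\infty}$. The estimate closes only if one distributes the fractional derivatives so that two factors fall into the dissipative norm $\dot H^{5/2}$ and only one into the critical norm $\dot H^{3/2}$; it is precisely this borderline behaviour that the sharper framework based on $\dot B^{3/2}_{2,1}$ is designed to circumvent.

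Finally I would close by a continuity/bootstrap argument: choosing $K$ so that $CK<1$, the inequality forces $X(t)\le X(0)<K^2$ for all $t<T^{*}$ and, after integration, $\int_0^{T^{*}}(\|u\|_{\dot H^{5/2}}^2+\|B\|_{\dot H^{5/2}}^2)\,dt<\infty$. Combining this with the energy inequality \eqref{ineq-energy}, which yields $(u,B)\in L^\infty(0,\infty;L^2)\cap L^2(0,\infty;\dot H^1)$, interpolation gives $u\in L^2(0,T^{*};\dot H^1\cap\dot H^{5/2})\hookrightarrow L^2(0,T^{*};L^\infty)$ and, starting from $\nabla B\in L^\infty(0,T^{*};\dot H^{1/2})\cap L^2(0,T^{*};\dot H^{3/2})$, also $\nabla B\in L^{\gamma}(0,T^{*};L^{\beta})$ for exponents with $3/\beta+2/\gamma=1$ and $\beta\in(3,\infty)$. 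Hence both Serrin quantities in Theorem \ref{thm1} stay finite up to $T^{*}$, so the $H^m$ norm cannot blow up; therefore $T^{*}=\infty$ and the unique classical solution satisfies $(u,B)\in L^\infty(0,\infty;H^m(\R^3))$, as asserted.
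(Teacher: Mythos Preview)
Your overall strategy is the right one, and it is essentially the paper's strategy: derive an a priori estimate at the $\dot H^{3/2}$ level, use smallness and continuity to propagate it, and feed the resulting integrability into a blow-up criterion.  There is, however, a genuine gap in the differential inequality you claim.

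You assert that every commutator remainder is bounded by $C\,X^{1/2}\bigl(\|u\|_{\dot H^{5/2}}^2+\|B\|_{\dot H^{5/2}}^2\bigr)$.  For the Hall term this is indeed true, because after the cancellation the commutator is tested against $\Lambda^{3/2}j\sim\Lambda^{5/2}B$, so one factor of the dissipative norm comes for free.  But for the transport-type terms the test function is only $\Lambda^{3/2}u$ (or $\Lambda^{3/2}B$).  The Kato--Ponce commutator gives at best
\[
\bigl|\langle[\Lambda^{3/2},u\!\cdot\!\nabla]u,\Lambda^{3/2}u\rangle\bigr|
\;\lesssim\;\|u\|_{\dot H^{3/2}}\,\|u\|_{\dot H^{5/2}}\cdot\|u\|_{\dot H^{3/2}}
\;=\;\|u\|_{\dot H^{3/2}}^2\,\|u\|_{\dot H^{5/2}},
\]
and no rearrangement using $\|f\|_{\dot H^2}\le\|f\|_{\dot H^{3/2}}^{1/2}\|f\|_{\dot H^{5/2}}^{1/2}$ or $\dot H^1\hookrightarrow L^6$ turns this into $X^{1/2}D$; the obstruction is the same borderline failure $\dot H^{3/2}\not\hookrightarrow L^\infty$ that you correctly identify for the Hall term.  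The actual $\dot H^{3/2}$ inequality is therefore
\[
\tfrac{d}{dt}X + D \;\le\; C\,X^{1/2}\,(D+X),
\]
which, even under $CX^{1/2}<1$, still leaves $\tfrac{d}{dt}X\le CX^{3/2}$ and hence no global bound.

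The paper closes this gap by running a \emph{second} energy estimate at the $\dot H^{1/2}$ level (inequality \eqref{ineq-00}).  The point is that the dissipation there is precisely $\|u\|_{\dot H^{3/2}}^2+\|B\|_{\dot H^{3/2}}^2=X$, which absorbs the residual $CX^{3/2}$ once the two estimates are added.  Smallness of $\|u_0\|_{\dot H^{1/2}}+\|B_0\|_{\dot H^{1/2}}$ is then obtained from the hypothesis by interpolating $\dot H^{1/2}$ between $L^2$ and $\dot H^{3/2}$ together with the energy inequality \eqref{ineq-energy}.  Your outline needs this auxiliary $\dot H^{1/2}$ estimate (or an equivalent device) to close; the rest of your plan---the bootstrap and the appeal to a blow-up criterion---is sound.
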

\begin{remark}\label{rem1}
Because Hall-term breaks the natural scaling of the Navier-Stokes equations, there does not exist the scaling invariant function spaces for the Hall-MHD equations \eqref{eq1}--\eqref{eq4}. If we set the fluid velocity  $u\equiv0$, then \eqref{eq3} is reduced to
\begin{equation}\label{eq-magneto}
\partial_t B+\nabla\times ((\nabla \times B)\times B)=\Delta B,\qquad\qquad \mbox{ in } \R^3\times (0, \, \infty),
\end{equation}
with $B(0, x)=B_0(x)$.\\ If we consider magnetic field $B$ satisfying \eqref{eq-magneto}, then the function $B_{\lambda}(x,t):= B(\lambda x, \lambda^2 t)$ form a solution to \eqref{eq-magneto} again. Therefore, $\nabla B$ of \eqref{eq-magneto} has the same scaling with the fluid velocity $u$ to the usual Navier-Stokes equations and $\| \nabla B \|_{L^{\gamma}(0, T^{*}; L^{\beta}(\R^3))}$ with $\frac{3}{\beta}+\frac{2}{\gamma}= 1,\mbox{ and } \beta \in (3, \infty]$ in Theorem \ref{thm1} is scaling invariant with respect to the above scaling. If we consider scaling of 3D Navier-Stokes equations and \eqref{eq-magneto}, then it seems natural to expect global existence of a classical solution with the small data $\| u_0\|_{\dot{H}^{\frac12}}+\| B_0 \|_{\dot{H}^{\frac32}}< \epsilon$. But due to the lack of the cancellation property when we deal with $u$ equation in $\dot{H}^{\frac12}$ and Hall equation in $\dot{H}^{\frac32}$, it seems difficult to
obtain the result in this space. If we use the suitable Besov spaces, however, we could overcome this difficulty, and could prove the following optimal
small data global well-posedness result as in the next theorem.
\end{remark}
\begin{theorem}\label{theorem3}
Let $m > \frac52$ be an integer and $(u_0, B_0) \in H^m(\R^3)$ with $\nabla\cdot u_0=\nabla \cdot B_0=0$. There exists a constant $\epsilon=\epsilon(\| B_0 \|_{L^2})>0$ such that if $\|  u_0 \|_{\dot{B}^{\frac12}_{2,1}} +\|  B_0 \|_{\dot{B}^{\frac32}_{2,1}}<\epsilon$, then there exists a unique global classical solution $(u,\, B) \in L^{\infty}(0, \infty; H^m(\R^3))$ to \eqref{eq1}--\eqref{eq4}.
\end{theorem}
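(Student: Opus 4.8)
The plan is to combine the known local well-posedness in $H^m$ with the blow-up criterion of Theorem \ref{thm2} and a global-in-time a priori bound measured in scaling-critical Besov spaces. By the local theory of \cite{Chae-Degond-Liu} there is a maximal classical solution on $[0,T^{*})$, and by Theorem \ref{thm2} it suffices to show that smallness of $\|u_0\|_{\dot{B}^{1/2}_{2,1}}+\|B_0\|_{\dot{B}^{3/2}_{2,1}}$ forces $\int_0^{T^{*}}(\|u\|_{BMO}^2+\|\nabla B\|_{BMO}^2)\,dt<\infty$, which then yields $T^{*}=\infty$ and global persistence of the $H^m$ norm. To this end I would propagate the critical regularity in the Chemin--Lerner scale, controlling
\[
E(t):=\|u\|_{\tilde{L}^{\infty}_{t}\dot{B}^{1/2}_{2,1}}+\|u\|_{\tilde{L}^{1}_{t}\dot{B}^{5/2}_{2,1}}+\|B\|_{\tilde{L}^{\infty}_{t}\dot{B}^{3/2}_{2,1}}+\|B\|_{\tilde{L}^{1}_{t}\dot{B}^{7/2}_{2,1}},
\]
the exponents being dictated by the parabolic gain of two derivatives and by the scaling identified in Remark \ref{rem1} ($u$ at level $1/2$, $B$ at level $3/2$).

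First I would localize each equation with the Littlewood--Paley blocks $\Delta_j$ and use the frequency-by-frequency heat estimate: for $\partial_t a-\Delta a=f$ one has $\|\Delta_j a(t)\|_{L^2}\lesssim e^{-c2^{2j}t}\|\Delta_j a_0\|_{L^2}+\int_0^t e^{-c2^{2j}(t-s)}\|\Delta_j f(s)\|_{L^2}\,ds$. Multiplying by $2^{js}$, taking $L^r$ in time and summing the resulting $\ell^1$ series in $j$ produces the maximal-regularity bound $\|a\|_{\tilde{L}^{\infty}_{t}\dot{B}^{s}_{2,1}}+\|a\|_{\tilde{L}^{1}_{t}\dot{B}^{s+2}_{2,1}}\lesssim\|a_0\|_{\dot{B}^{s}_{2,1}}+\|f\|_{\tilde{L}^{1}_{t}\dot{B}^{s}_{2,1}}$. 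This is exactly where the $\ell^1$ summation of the $\dot{B}^{s}_{2,1}$ norm is essential: it closes without the logarithmic loss that obstructs the $\dot{H}^s$ (equivalently $\dot{B}^s_{2,2}$) estimate, which is the difficulty flagged in Remark \ref{rem1}. Applying this with $s=1/2$ to \eqref{eq1-1} (after projecting off the pressure with the Leray projector) and with $s=3/2$ to \eqref{eq3-1} reduces everything to estimating the nonlinearities in $\tilde{L}^{1}_{t}\dot{B}^{1/2}_{2,1}$ and $\tilde{L}^{1}_{t}\dot{B}^{3/2}_{2,1}$ respectively.

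The nonlinear terms I would handle by Bony's paraproduct decomposition together with the product laws in $\dot{B}^{s}_{2,1}$. The Navier--Stokes term $(u\cdot\nabla)u$, the transport term $(u\cdot\nabla)B$ and the stretching term $(B\cdot\nabla)u$ are controlled by standard bilinear estimates (using the interpolated norms $\tilde{L}^2_t\dot{B}^{3/2}_{2,1}$ for $u$ and $\tilde{L}^2_t\dot{B}^{5/2}_{2,1}$ for $B$), each producing a quadratic contribution to $E$. The decisive structural point is that in $\R^3$ the index $3/2=d/2$ is exactly the algebra threshold, so $\dot{B}^{3/2}_{2,1}$ is a multiplier algebra; writing the Hall term schematically, one gets $\|\nabla\times((\nabla\times B)\times B)\|_{\dot{B}^{3/2}_{2,1}}\lesssim\|B\|_{\dot{B}^{3/2}_{2,1}}\|B\|_{\dot{B}^{7/2}_{2,1}}$, whence after time integration a bound by $\|B\|_{\tilde{L}^{\infty}_{t}\dot{B}^{3/2}_{2,1}}\|B\|_{\tilde{L}^{1}_{t}\dot{B}^{7/2}_{2,1}}\le E^2$. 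The genuinely delicate term is the Lorentz force $(B\cdot\nabla)B$ in \eqref{eq1-1}: measured at the velocity level its natural bound would require $\|B\|_{\tilde{L}^{1}_{t}\dot{B}^{3/2}_{2,1}}$, a norm \emph{not} furnished by the parabolic gain (which lives at regularity $7/2$ in $\tilde{L}^1_t$). Here I would split $B$ into low and high frequencies and absorb the low-frequency part using the basic energy inequality \eqref{ineq-energy}, which provides $\nabla B\in L^2_tL^2_x$ with a bound by the initial energy; converting this into low-frequency control of the critical norm introduces a frequency-dependent constant governed by $\|B_0\|_{L^2}$, and this is precisely what forces the smallness threshold to depend on $\|B_0\|_{L^2}$.

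Collecting the estimates yields a closed inequality of the form $E(t)\le C_0\big(\|u_0\|_{\dot{B}^{1/2}_{2,1}}+\|B_0\|_{\dot{B}^{3/2}_{2,1}}\big)+C(\|B_0\|_{L^2})\,E(t)^2$ on $[0,T^{*})$. A standard continuation argument then shows that if $\|u_0\|_{\dot{B}^{1/2}_{2,1}}+\|B_0\|_{\dot{B}^{3/2}_{2,1}}<\epsilon$ with $\epsilon=\epsilon(\|B_0\|_{L^2})$ small enough, $E(t)$ stays below a fixed small bound for all $t<T^{*}$; uniqueness follows from the same bilinear estimates applied to the difference of two solutions. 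Finally, interpolating $\tilde{L}^{\infty}_{t}\dot{B}^{1/2}_{2,1}\cap\tilde{L}^{1}_{t}\dot{B}^{5/2}_{2,1}$ gives $u\in\tilde{L}^{2}_{t}\dot{B}^{3/2}_{2,1}\hookrightarrow L^2_tL^{\infty}\hookrightarrow L^2_tBMO$, and likewise $\nabla B\in L^2_tBMO$, so $\int_0^{T^{*}}(\|u\|_{BMO}^2+\|\nabla B\|_{BMO}^2)\,dt<\infty$; Theorem \ref{thm2} then rules out blow-up, giving $T^{*}=\infty$ and $(u,B)\in L^{\infty}(0,\infty;H^m(\R^3))$. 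I expect the main obstacle to be the Hall term together with the Lorentz coupling $(B\cdot\nabla)B$: these are exactly the places where, as Remark \ref{rem1} explains, the $\dot{H}^{1/2}/\dot{H}^{3/2}$ cancellation fails, and closing them requires both the algebra property of $\dot{B}^{3/2}_{2,1}$ in three dimensions and the low-frequency control coming from the $L^2$ energy, the latter being responsible for the dependence $\epsilon=\epsilon(\|B_0\|_{L^2})$.
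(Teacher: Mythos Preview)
Your overall architecture---propagate scaling-critical $\dot{B}^{s}_{2,1}$ norms via frequency-localized heat estimates, close a quadratic inequality, then invoke a blow-up criterion---is exactly the paper's. The Chemin--Lerner formulation versus the paper's direct energy argument (divide the $L^2$ estimate for $\Delta_q u$ by $\|\Delta_q u\|_{L^2}$, multiply by $2^{qs}$, sum, integrate in time) are equivalent packagings; and closing via Theorem~\ref{thm2} rather than Theorem~\ref{thm1} is harmless.

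The substantive divergence, and the place where your proposal has a gap, is the Lorentz force $(B\cdot\nabla)B$ in the velocity equation. You correctly identify that with only $\|B\|_{\tilde{L}^{\infty}_{t}\dot{B}^{3/2}_{2,1}}\cap\tilde{L}^{1}_{t}\dot{B}^{7/2}_{2,1}$ in hand, the needed $\tilde{L}^{1}_{t}\dot{B}^{1/2}_{2,1}$ bound does not close: any product law lands you on $\|B\|_{\dot{B}^{3/2}_{2,1}}^{2}$, and your interpolation between levels $3/2$ and $7/2$ only reaches $\tilde{L}^{2}_{t}\dot{B}^{5/2}_{2,1}$, not $\tilde{L}^{2}_{t}\dot{B}^{3/2}_{2,1}$. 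Your proposed remedy---a low/high frequency split with the low part controlled by the energy bound $\nabla B\in L^{2}_{t}L^{2}_{x}$---does not close uniformly in $t$: the low-frequency piece produces an $L^{1}_{t}$ integral that, after H\"older against the $L^{2}_{t}$ energy control, carries an unavoidable factor of $t^{1/2}$. This cannot be absorbed into a time-independent bound of the form $E\le C_0+C E^2$, so the bootstrap does not run to $T^{*}=\infty$ as stated.

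The paper sidesteps this entirely by enlarging the functional: it propagates
\[
X(t)=\|u(t)\|_{\dot{B}^{1/2}_{2,1}}+\|B(t)\|_{\dot{B}^{1/2}_{2,1}}+\|B(t)\|_{\dot{B}^{3/2}_{2,1}},
\]
i.e.\ it also carries $B$ at the velocity level $1/2$. Then the Lorentz term is simply $\|B\cdot\nabla B\|_{\dot{B}^{1/2}_{2,1}}\le C\|B\|_{\dot{B}^{1/2}_{2,1}}\|B\|_{\dot{B}^{5/2}_{2,1}}$, which fits the $X\cdot Y$ structure with $Y=\|u\|_{\dot{B}^{5/2}_{2,1}}+\|B\|_{\dot{B}^{5/2}_{2,1}}+\|B\|_{\dot{B}^{7/2}_{2,1}}$, and the dependence $\epsilon=\epsilon(\|B_0\|_{L^2})$ arises for a much simpler reason than you suggest: one needs $\|B_0\|_{\dot{B}^{1/2}_{2,1}}$ small, and this is obtained from the hypothesis via the interpolation $\|B_0\|_{\dot{B}^{1/2}_{2,1}}\le C\|B_0\|_{\dot{B}^{3/2}_{2,1}}^{1/3}\|B_0\|_{L^2}^{2/3}$ (Proposition~\ref{Besov}(v)). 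Adding $\|B\|_{\tilde{L}^{\infty}_{t}\dot{B}^{1/2}_{2,1}}+\|B\|_{\tilde{L}^{1}_{t}\dot{B}^{5/2}_{2,1}}$ to your $E$ would repair your argument in the same way.
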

\begin{remark}\label{rem2}
We consider $2\frac12 D$ flows as in \cite{MB} for the Hall-MHD, i.e., \[
 u: \R^2\times (0, \infty) \rightarrow \R^3\qquad B:\R^2\times (0, \infty) \rightarrow \R^3.
\]
We denote $\tilde{u}=(u_1,\, u_2)$, $\tilde{B}=(B_1,\, B_2)$, $\tilde{\nabla}=\left( \partial_1,\, \partial_2 \right)$, $\tilde{\Delta}=\partial_1^2+\partial_2^2$ and $j=\nabla\times B=\left(\begin{array}{ccc} \partial_2 B_3\\-\partial_1 B_3\\ \partial_1 B_2-\partial_2 B_1 \end{array}\right)$.\\
Using the above notation, \eqref{eq1}--\eqref{eq3} are reduced to the following system (considering the initial data satisfies $\tilde{\nabla}\cdot \tilde{B}_0=0$).
\begin{eqnarray}
&& \partial_t u +(\tilde{u}\cdot \tilde{\nabla}) u +\tilde{\nabla} \left(p+\frac{|B|^2}{2}\right)=(\tilde{B}\cdot\tilde{\nabla}) B+\tilde{\Delta} u, \label{eq1-3}\\
&& \tilde{\nabla} \cdot \tilde{u}=0, \qquad\qquad\qquad\qquad\qquad\qquad\qquad\qquad\qquad\qquad \mbox{ in }  \R^2\times (0, \infty),\label{eq2-3}\\
&& \partial_t B+ (\tilde{u}\cdot\tilde{\nabla})B+(\tilde{B} \cdot\tilde{\nabla})j-(\tilde{j}\cdot\tilde{\nabla})B=\tilde{\Delta} B+(\tilde{B}\cdot\tilde{\nabla})u, \label{eq3-3}
\end{eqnarray}
For $2\frac12 D$ Navier-Stokes flows, the global existence of a classical solution is immediate, since $\tilde{u}$ is solution to 2D Navier-Stokes equations and $u_3$ is a solution to the linear scalar diffusion equation. For $2\frac12 D$ usual MHD flows, the global existence can be obtained from the blow-up criterion, standard embedding
\[
\dot{H}^1(\R^2) \hookrightarrow BMO(\R^2),
\]
  and energy inequality
\[
\int_0^T \| u(t)\|_{BMO}^2 dt \leq C\int_0^T \| \tilde{\nabla} u (t)\|_{L^2}^2 dt< \infty,\mbox{ for any } T>0.
\]
Also for the both equations, planar component equations and $u_3$ or $B_3$ equation are decoupled. So if we have the global existence of a classical solution to the equations of the planar part, then we can solve the equation of the third component.
But for the $2\frac12 D$  Hall-MHD flows, planar part $(\tilde{u},\tilde{B})$ and the third components are related intimately through the Hall term $(\tilde{B} \cdot\tilde{\nabla})j-(\tilde{j}\cdot\tilde{\nabla})B$. Hence, it seems a challenging problem whether there exists a finite time blowup of a classical solution or not for $2\frac12 D$ Hall-MHD flows \eqref{eq1-3}--\eqref{eq3-3}. In this case, if we use Theorem \ref{thm2}, we reduce the blow-up criterion as follows: With the same assumptions in Theorem \ref{thm2}, it holds that
\[
\limsup_{t \nearrow T^{*}} (\| u(t)\|_{H^m}^2+\| B(t)\|_{H^m}^2)=\infty,
\]
if and only if
\[
\int_0^{T^{*}} \| j\|_{BMO}^2dt=\infty.
\]
\end{remark}
\section{Preliminaries}\label{sect2}
We first set our notations, and recall definitions and properties of the Besov spaces. We follow \cite{Trieb}. Let ${\mathcal{S}}$ be the Schwartz class of rapidly decreasing functions. Given $f \in {\mathcal{S}}$, its Fourier transform $\hat{f}$ is defined by
\[
\hat{f}(\xi)=\frac{1}{(2 \pi)^d}\int_{\R^d} e^{-ix \cdot \xi} f(x) dx.
\]
We consider the homogeneous Besov spaces $\dot{B}^{s}_{p,q}$ ($p,q \in [1, \infty]$), which is defined as follows. Let $\{ \psi_k \}_{ k \in {\mathbb{Z}}}$ be the Littlewood-Paley partition of unity, whose Fourier transform $\hat{\psi}_k (\xi)$ is supported on the annulus $\{ \xi \in {\mathbb{R}}^d\, |\, 2^{k-1} \leq |\xi| \leq 2^k \}$.  Then the homogeneous Besov semi-norm $\| f \|_{\dot{B}^{s}_{p,q}}$ is defined by
\[
\| f \|_{\dot{B}^{s}_{p,q}}=\left\{ \begin{array}{ll}  [\sum_{-\infty}^{\infty} 2^{kqs}\| \psi_k * f \|_{L^p}^q]^{\frac{1}{q}}\mbox{ if }q \in [1,\, \infty)\\ \sup_{k} [2^{ks} \| \psi_k * f\|_{L^p}]\mbox{ if } q=\infty.    \end{array}\right.
\]
Also the homogeneous Triebel-Lizorkin space $\dot{F}^{s}_{p,q}$ is a quasi-normed space with semi-norm $\| \cdot \|_{\dot{F}^s_{p,q}}$ which is defined by
\[
\| f \|_{\dot{F}^{s}_{p,q}}=\left\{ \begin{array}{ll}  \left\|\left(\sum_{k \in {\mathbb{Z}}} 2^{kqs}| \psi_k * f (\cdot) |^q\right)^{1/q}\right\|_{L^p}\mbox{ if }q \in [1,\, \infty)\\ \left\|\sup_{k\in {\mathbb{Z}}} (2^{ks} | \psi_k * f|)\right\|_{L^p}\mbox{ if } q=\infty.    \end{array}\right.
\]
\begin{proposition}\label{Besov}
{\rm{(i)}} Bernstein's Lemma: Assume that $f \in L^p$ with $1\leq p \leq \infty$, and $\mbox{supp }\hat{f} \subset \{ 2^{j-2} \leq |\xi| <2^j\}$, then there exists a constant $C_k$ such that the following inequalities hold:
\[
C_k^{-1} 2^{jk} \| f\|_{L^p} \leq \| D^k f\|_{L^p} \leq C_k 2^{jk} \| f\|_{L^p}.
\]
Assume that $f \in L^p$ with $1\leq p \leq q\leq \infty$, and $\mbox{supp }\hat{f} \subset \{ |\xi| <2^j\}$, then there exists a constant $C$ such that the following inequality holds:
\[
\| f\|_{L^q} \leq C2^{jd\left( \frac{1}{p}-\frac{1}{q}\right)} \| f\|_{L^p}.
\]
{\rm{(ii)}} We have the equivalence of norms
\[
\| D^k f \|_{\dot{B}^{s}_{p,q}} \sim \| f\|_{\dot{B}^{s+k}_{p,q}}.
\]
{\rm{(iii)}} Let $s>0,\, q \in [1, \infty]$, then there exists a constant $C$ such that the following inequalities holds :
\[
\| fg\|_{\dot{B}^{s}_{p,q}}\leq C\left( \| f\|_{L^{p_1}}\|g\|_{\dot{B}^{s}_{p_2,q}}+\| g\|_{L^{r_1}}\|f\|_{\dot{B}^{s}_{r_2,q}}  \right),
\]
for homogeneous Besov spaces, where $p_1,\, r_1 \in [1,\infty]$ such that $\frac{1}{p} =\frac{1}{p_1}+\frac{1}{p_2}=\frac{1}{r_1}+\frac{1}{r_2}$.\\
Let $s_1,s_2 \leq \frac{n}{p}$ ($n$ is the dimension) such that $s_1 +s_2 >0$, $f \in \dot{B}^{s_1}_{p,1}$ and $g \in \dot{B}^{s_2}_{p,1}$. Then $fg \in \dot{B}^{s_1+s_2-\frac{n}{p}}_{p,1}$ and
\[
\| fg\|_{ \dot{B}^{s_1+s_2-\frac{n}{p}}_{p,1}}\leq C\| f\|_{\dot{B}^{s_1}_{p,1}}\| g\|_{ \dot{B}^{s_2}_{p,1}}.
\]
{\rm{(iv)}} For $s \in (-\frac{n}{p}-1, \frac{n}{p}]$ we have
\[
\| [u, \Delta_q ] w \|_{L^p} \leq c_q 2^{-q(s+1)} \| u \|_{\dot{B}^{\frac{n}{p}+1}_{p,1}} \| w\|_{\dot{B}^{s}_{p,1}}
\]
with $\sum_{q \in {\mathbb{Z}}} c_q \leq 1$. In the above, we denote
\[
[u, \Delta_q] w =u\Delta_qw -\Delta_q(uw).
\]
{\rm{(v)}} We have the following interpolation inequalities for $s, s_1>0$, $s=\theta s_1$ and $\theta\in (0, 1)$:
\[
\| f \|_{\dot{B}^{s}_{2,1}} \leq C \| f \|_{\dot{B}^{s_1}_{2,1}}^{\theta} \| f\|_{L^2}^{1-\theta}.
\]
{\rm{(vi)}} Suppose $ \nabla f\in BMO(\R^d)$ and $f \in  L^2$. Then we have
\[
\| f \|_{BMO} \leq C(\| \nabla f \|_{BMO}+ \| f\|_{L^2}).
\]
\end{proposition}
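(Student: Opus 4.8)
The six assertions are all standard consequences of Littlewood--Paley theory and Bony's paraproduct calculus, so the plan is to treat them in turn, reserving the bulk of the effort for the commutator estimate (iv). Throughout I write $\Delta_k f=\psi_k*f$ for the Littlewood--Paley block, $S_{k-1}=\sum_{j\le k-2}\Delta_j$ for the low-frequency cutoff, and $\widetilde\Delta_k=\Delta_{k-1}+\Delta_k+\Delta_{k+1}$. For (i), I would write $D^k\Delta_j f=\mathcal F^{-1}[(i\xi)^k\tilde\psi_j(\xi)]*f$, where $\tilde\psi_j$ is a fixed smooth bump equal to $1$ on the annulus $\{2^{j-2}\le|\xi|<2^j\}$; by scaling its inverse Fourier transform has $L^1$ norm comparable to $2^{jk}$, so Young's inequality gives the upper bound, while replacing $(i\xi)^k$ by a multiplier that inverts the symbol $|\xi|^k$ on the annulus (where it is bounded away from zero) gives the lower bound. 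The $L^p\to L^q$ inequality is just $\|f\|_{L^q}=\|\check\phi_j*f\|_{L^q}\le\|\check\phi_j\|_{L^r}\|f\|_{L^p}$ with $1+1/q=1/r+1/p$, together with the scaling $\|\check\phi_j\|_{L^r}=C\,2^{jd(1/p-1/q)}$. Assertion (ii) is then immediate: since $\Delta_k D^j f=D^j\Delta_k f$ is frequency-localised, (i) yields $\|D^j\Delta_k f\|_{L^p}\sim 2^{kj}\|\Delta_k f\|_{L^p}$, and multiplying by $2^{ks}$ and taking the $\ell^q$ norm turns the weight into $2^{k(s+j)}$.

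For the product laws (iii) I would use Bony's decomposition $fg=T_fg+T_gf+R(f,g)$, with $T_fg=\sum_k S_{k-1}f\,\Delta_k g$ and $R(f,g)=\sum_k\Delta_k f\,\widetilde\Delta_k g$. Each paraproduct is frequency-localised, so $\Delta_j(T_fg)$ only sees $|k-j|\le N$; Hölder in the stated exponents bounds $\|\Delta_j(T_fg)\|_{L^p}\lesssim\|f\|_{L^{p_1}}\sum_{|k-j|\le N}\|\Delta_k g\|_{L^{p_2}}$, and multiplying by $2^{js}$ and summing in $\ell^q$ produces $\|f\|_{L^{p_1}}\|g\|_{\dot B^s_{p_2,q}}$; $T_gf$ is symmetric, and since $s>0$ the remainder $R(f,g)$ sums geometrically. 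For the second (critical) product law the same decomposition is used, but now Bernstein converts the $L^p$ bounds on each block into the weights appropriate to $\dot B^{s_1}_{p,1}$ and $\dot B^{s_2}_{p,1}$; the hypotheses $s_1,s_2\le n/p$ keep the two paraproducts summable and $s_1+s_2>0$ makes the remainder converge, yielding the target regularity $s_1+s_2-n/p$.

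The commutator estimate (iv) is the main obstacle. Writing $\Delta_q w=h_q*w$ with $h_q(x)=2^{qd}h(2^qx)$, one has
\[
[u,\Delta_q]w(x)=\int h_q(x-y)\,\bigl(u(x)-u(y)\bigr)\,w(y)\,dy ,
\]
so the commutator structure manufactures the factor $u(x)-u(y)$, which a first-order Taylor expansion replaces by $(x-y)\cdot\nabla u$ up to a remainder; since $\int|y|\,|h_q(y)|\,dy\sim 2^{-q}$, this is precisely where the gain of one derivative --- hence the prefactor $2^{-q(s+1)}$ rather than $2^{-qs}$ --- originates. The delicate part is to run this estimate together with a paraproduct decomposition of both $u$ and $w$, so that only the frequency interactions with $u$ localised near or below $2^q$ contribute, and then to collect the resulting block bounds into a summable sequence $c_q$ with $\sum_q c_q\le1$; the restriction $s\in(-n/p-1,\,n/p]$ is exactly what renders both the high--high and the low--high pieces summable. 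I expect this bookkeeping, and the verification that the Taylor remainder does not destroy the gain, to be the most technical step of the whole proposition.

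Finally, (v) follows by splitting $\|f\|_{\dot B^s_{2,1}}=\sum_k 2^{ks}\|\Delta_k f\|_{L^2}$ at a level $N$, bounding the low-frequency part by $2^{Ns}\|f\|_{L^2}$ (using $s>0$) and the high-frequency part by $2^{-N(s_1-s)}\|f\|_{\dot B^{s_1}_{2,1}}$ (using $s<s_1$), and optimising the integer $N$ so that $2^{Ns_1}\sim\|f\|_{\dot B^{s_1}_{2,1}}/\|f\|_{L^2}$, which reproduces the exponent $\theta=s/s_1$. For (vi) I would invoke the classical Triebel identification $\|g\|_{BMO}\sim\|g\|_{\dot F^0_{\infty,2}}$ and split $f$ into frequencies $k\le0$ and $k>0$. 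For $k\le0$, Bernstein gives $\|\Delta_k f\|_{L^\infty}\lesssim 2^{kd/2}\|f\|_{L^2}$, so the low-frequency square function is dominated pointwise by $\sum_{k\le0}2^{kd/2}\|f\|_{L^2}\lesssim\|f\|_{L^2}$. For $k>0$ the elementary bound $2^{0}\le 2^{k}$ gives $(\sum_{k>0}|\Delta_k f|^2)^{1/2}\le(\sum_{k>0}2^{2k}|\Delta_k f|^2)^{1/2}$ pointwise, whence the high-frequency contribution is controlled by $\|f\|_{\dot F^1_{\infty,2}}\sim\|\nabla f\|_{\dot F^0_{\infty,2}}\sim\|\nabla f\|_{BMO}$, where the derivative--equivalence of norms is the Triebel--Lizorkin analogue of (ii). Adding the two contributions gives the stated inequality.
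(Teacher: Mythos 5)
Your proposal is correct and follows essentially the same route as the paper: for (i)--(iv) the paper simply cites standard references (your Bernstein-by-scaling, Bony paraproduct, and commutator-kernel sketches are exactly those standard arguments), and your proofs of (v) (splitting the dyadic sum at a level $N$ chosen so that $2^{Ns_1}\sim \|f\|_{\dot{B}^{s_1}_{2,1}}/\|f\|_{L^2}$, giving $\theta=s/s_1$) and of (vi) (using $BMO=\dot{F}^0_{\infty,2}$ and splitting frequencies at $k=0$, with the weight $2^{k}$ absorbing the high frequencies into $\|\nabla f\|_{BMO}$) coincide with the paper's. The only micro-variation is in the low-frequency part of (vi), where you apply Bernstein blockwise, $\|\psi_k*f\|_{L^\infty}\lesssim 2^{kd/2}\|f\|_{L^2}$, and sum the geometric series, whereas the paper applies Bernstein once to the whole square function $\sum_{k<0}|\psi_k*f|^2$, whose Fourier transform is supported in the unit ball --- both steps are valid and yield the same bound $C\|f\|_{L^2}$.
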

\begin{proof}
The proof of (i)--(iv) is rather standard and we can find the proofs in many references (e.g., see \cite{Chae-Lee} and references therein). Although the proofs for (v) and (vi) looks standard, we could not find a literature including these, and we present  their proofs here.
In order to prove (v) we write:
\[
\| f \|_{\dot{B}^{s}_{2,1}}= \sum_{q \in {\mathbb{Z}}} 2^{qs} \| \Delta_q f \|_{L^2} =\sum_{q \leq N}2^{qs} \| \Delta_q f \|_{L^2}+\sum_{q >N} 2^{qs} \| \Delta_q f \|_{L^2}
\]
\[
\leq \left(\sum_{q\leq N}2^{2qs}\right)^{\frac12} \left(  \sum_{q\in {\mathbb{Z}}}\| \Delta_q f \|_{L^2}^2 \right)^{\frac12}+C2^{N(s-s_1)}\sum_{q >N} 2^{qs_1} \| \Delta_q f\|_{L^2}
\]
\[
\leq C2^{Ns}\| f\|_{L^2}+C2^{N(s-s_1)} \| f \|_{\dot{B}^{s_1}_{2,1}}.
\]
Let us choose $N=\left[ \log_2\left(  \frac{\| f \|_{\dot{B}^{s_1}_{2,1}}}{\| f\|_{L^2}}  \right)^{\frac{1}{s_1}}   \right]$, i.e., satisfying
\[
2^{Ns}\| f\|_{L^2}=2^{N(s-s_1)} \| f \|_{\dot{B}^{s_1}_{2,1}}.
\]
Then we have the interpolation inequality in (v). Since we know $BMO=\dot{F}^{0}_{\infty, 2}$ (see pp. 243--244 of \cite{Trieb}) and the Fourier transform of $\sum_{k<0} |\psi_k *f|^2$ is supported in the unit ball, we deduce, using (i), that
\begin{eqnarray*}
\| f \|_{BMO} & \leq & \left\| \left(  \sum_{k \geq 0} |\psi_k *f |^2 \right)^{1/2}\right\|_{L^{\infty}} + \left\| \left(  \sum_{k < 0} |\psi_k *f |^2 \right)^{1/2}\right\|_{L^{\infty}}\\
&\leq & \left\| \left(  \sum_{k \geq 0} 2^{2k}|\psi_k *f |^2 \right)^{1/2}\right\|_{L^{\infty}}+C \left\| \left( \sum_{k < 0} |\psi_k *f |^2 \right)^{1/2}\right\|_{L^2}\\
&\leq & C(\| \nabla f \|_{BMO}+ \| f\|_{L^2}).
\end{eqnarray*}
This completes the proof of (vi).
\end{proof}
\section{Regularity Criterion}
Through this paper, $C$ denotes a generic positive constant which may change from one line to the other.\\
We first derive blow-up criterion for classical solutions to \eqref{eq-magneto} as mentioned in Remark \ref{rem1}, which is called the Hall equation.
\begin{proposition}\label{prp1}
Let $m >\frac52$ be an integer and $ B_0 \in H^{m}({\mathbb{R}}^3)$. Then for the first blow-up time $T^{*}<\infty$ of the classical solution to \eqref{eq-magneto}, it holds that  \[
\limsup_{t \nearrow T^{*}} \| B(t)\|_{H^m}^2=\infty,
\]
if and only if
\[
\| \nabla B \|_{L^{\gamma}(0, T^{*}; L^{\beta}(\R^3))}=\infty,
\]
where $\beta$ and $\gamma$ satisfy the relation
\[
 \frac{3}{\beta}+\frac{2}{\gamma}\leq 1,\mbox{ and } \beta \in (3, \infty].
\]
\end{proposition}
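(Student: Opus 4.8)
The plan is to prove the equivalence by establishing both directions in contrapositive form, the substantial content being an a priori bound showing that finiteness of $\|\nabla B\|_{L^\gamma(0,T^*;L^\beta)}$ forces $\sup_{t<T^*}\|B(t)\|_{H^m}$ to stay finite. I would first rewrite \eqref{eq-magneto} in the form convenient for energy estimates: setting $j=\nabla\times B$ and using $\nabla\cdot B=\nabla\cdot j=0$, the equation reads $\partial_t B-\Delta B=-\nabla\times(j\times B)$. Testing against $B$ and integrating by parts via $\int\nabla\times F\cdot G\,dx=\int F\cdot\nabla\times G\,dx$ gives the free energy identity $\frac12\frac{d}{dt}\|B\|_{L^2}^2+\|\nabla B\|_{L^2}^2=-\int(j\times B)\cdot j\,dx=0$, so the $L^2$ part is controlled for free and only the higher part of the $H^m$ norm needs work. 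The easy direction is immediate: if $\limsup_{t\nearrow T^*}\|B(t)\|_{H^m}^2<\infty$, then the embedding $H^m(\R^3)\hookrightarrow W^{1,\beta}(\R^3)$ (valid for $m>\tfrac52$ and all $\beta\le\infty$) gives $\|\nabla B\|_{L^\beta}\le C\|B\|_{H^m}$, whence $\|\nabla B\|_{L^\gamma(0,T^*;L^\beta)}\le (T^*)^{1/\gamma}\sup_{t<T^*}\|\nabla B\|_{L^\beta}<\infty$.

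For the main estimate I would apply $\partial^\alpha$ with $|\alpha|\le m$ to the reformulated equation, test with $\partial^\alpha B$, and sum. The diffusion produces the dissipation $\sum_{|\alpha|\le m}\|\nabla\partial^\alpha B\|_{L^2}^2$, and after moving the curl as above the nonlinear contribution at order $\alpha$ becomes
\[
-\int\partial^\alpha(j\times B)\cdot\partial^\alpha j\,dx,\qquad \partial^\alpha j=\nabla\times\partial^\alpha B .
\]
The decisive structural observation is that the top-order term of the Leibniz expansion, in which every derivative lands on $j$, is $-\int(\partial^\alpha j\times B)\cdot\partial^\alpha j\,dx$ and vanishes pointwise because $a\times B$ is orthogonal to $a$. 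This cancellation eliminates the single term whose derivative count cannot be controlled by the dissipation; it is exactly here that the quasilinear, second-order-in-$B$ nature of the Hall term would otherwise be fatal.

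What remains is $-\sum_{0\le\gamma<\alpha}\binom{\alpha}{\gamma}\int(\partial^\gamma j\times\partial^{\alpha-\gamma}B)\cdot\partial^\alpha j\,dx$, whose most dangerous piece is $\gamma=0$, namely $-\int(j\times\partial^\alpha B)\cdot\partial^\alpha j\,dx$. I would bound it by Hölder as $\|\nabla B\|_{L^\beta}\|\partial^\alpha B\|_{L^{p'}}\|\nabla\partial^\alpha B\|_{L^2}$ with $\tfrac1{p'}=\tfrac12-\tfrac1\beta$, then apply Gagliardo–Nirenberg, $\|\partial^\alpha B\|_{L^{p'}}\lesssim\|\partial^\alpha B\|_{L^2}^{1-\theta}\|\nabla\partial^\alpha B\|_{L^2}^{\theta}$ with $\theta=\tfrac3\beta$. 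Crucially $\theta<1$ precisely because $\beta>3$, so Young's inequality absorbs $\|\nabla\partial^\alpha B\|_{L^2}^{1+\theta}$ into the dissipation at the cost of $C\|\nabla B\|_{L^\beta}^{2/(1-\theta)}\|\partial^\alpha B\|_{L^2}^2$, where $\tfrac2{1-\theta}=\tfrac{2\beta}{\beta-3}\le\gamma$ by the Serrin relation $\tfrac3\beta+\tfrac2\gamma\le1$. The intermediate terms $0<\gamma<\alpha$ carry at most $m$ derivatives on each non-top factor and are estimated the same way through Moser-type product estimates and Gagliardo–Nirenberg. Summing over $\alpha$ and combining with the $L^2$ bound yields
\[
\frac{d}{dt}\|B\|_{H^m}^2\le C\bigl(1+\|\nabla B\|_{L^\beta}^{2\beta/(\beta-3)}\bigr)\|B\|_{H^m}^2 .
\]
Since $\tfrac{2\beta}{\beta-3}\le\gamma$ and $T^*<\infty$, Hölder in time shows $\int_0^{T^*}\|\nabla B\|_{L^\beta}^{2\beta/(\beta-3)}\,dt<\infty$ whenever $\|\nabla B\|_{L^\gamma(0,T^*;L^\beta)}<\infty$, and Grönwall's inequality then bounds $\sup_{t<T^*}\|B(t)\|_{H^m}$, ruling out blow-up. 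The main obstacle, and the heart of the proof, is the criticality of this top-order nonlinear term: only the algebraic cancellation coming from the cross-product/curl structure, together with the borderline interpolation available exactly for $\beta>3$, makes it possible to close the estimate against the heat dissipation.
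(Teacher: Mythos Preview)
Your proposal is correct and follows essentially the same route as the paper's own proof: you exploit the same top-order cancellation $(\partial^\alpha j\times B)\cdot\partial^\alpha j=0$, then control the commutator by H\"older with $\|\nabla B\|_{L^\beta}$, Gagliardo--Nirenberg interpolation with exponent $\theta=3/\beta$, and Young's inequality to absorb into the dissipation, arriving at the differential inequality $\frac{d}{dt}\|B\|_{H^m}^2\le C\|\nabla B\|_{L^\beta}^{2\beta/(\beta-3)}\|B\|_{H^m}^2$ and closing by Gr\"onwall. The only cosmetic difference is that the paper bundles all the Leibniz terms into a single product (calculus) inequality $|J|\le C\|\nabla B\|_{L^\beta}\|B\|_{W^{m,2\beta/(\beta-2)}}\|\nabla B\|_{H^m}$ rather than singling out the $\gamma=0$ piece, and it does not spell out the trivial converse direction that you include.
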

\begin{proof} Taking $D^{\alpha}=\partial^{|\alpha|}/\partial x_1^{\alpha_1}\partial x_{2}^{\alpha_2}\partial x_3^{\alpha_3}$ operator where $\alpha=(\alpha_1, \alpha_2, \alpha_3)\in ({\mathbb{N}}\cup \{0\})^3$ with $|\alpha|=\alpha_1+\alpha_2+\alpha_3 \leq m$ with
$m \geq 3$, scalar product with $D^{\alpha} B$, and sum over $\alpha$ with $|\alpha| \leq m$, we have
\[
\frac12 \frac{d}{dt} \|  B \|_{H^m}^2 +\| \nabla B \|_{H^m}^2
= -\sum_{ |\alpha| \leq m}\int_{\R^3} D^{\alpha} ((\nabla \times B)\times B)\cdot D^{\alpha} (\nabla \times B) dx=J .\]
Using the cancellation property, we have
\[
J=-\sum_{0\leq |\alpha| \leq m}\int_{\R^3} [D^{\alpha} ((\nabla \times B)\times B)-(D^{\alpha}(\nabla \times B))\times B]\cdot D^{\alpha} (\nabla \times B) dx
\]
From the calculus inequality, interpolation inequality and Young's inequality, we have
\begin{eqnarray*}
|J| &\leq & C \| \nabla B \|_{L^{\beta}} \|  B \|_{W^{m, \frac{2\beta}{\beta-2}}} \| \nabla B \|_{H^m}\\
&\leq & C\| \nabla B \|_{L^{\beta}} \|  B \|_{H^m}^{\frac{\beta-3}{\beta}} \|  \nabla B \|_{H^m}^{\frac{\beta+3}{\beta}}\\
&\leq & C\| \nabla B \|_{L^{\beta}}^{\frac{2\beta}{\beta-3}}\|  B \|_{H^m}^2+\frac12\|  \nabla B \|_{H^m}^2.
\end{eqnarray*}
Consequently, one has
\[
\frac{d}{dt}\|  B \|_{H^m}^2 +\| \nabla B \|_{H^m}^2 \leq C\| \nabla B \|_{L^{\beta}}^{\frac{2\beta}{\beta-3}}\|  B \|_{H^m}^2.
\]
Using the Gronwall's inequality, we have\[
\sup_{ 0\leq t \leq T} \| B (t) \|_{H^m}^2 \leq \| B_0 \|_{H^m}^2 \exp \left( C\int_0^T \| \nabla B \|_{L^{\beta}}^{\frac{2\beta}{\beta-3}}dt \right).\]
Since $\frac{3}{\beta}+\frac{\beta-3}{\beta}=1$, this completes the proof.
\end{proof}
Next, we derive a priori estimates for the proof of our theorems.
\begin{proposition}\label{prp2}
Let $m >\frac52$ be an integer and $(u, B)$ be a smooth solution to \eqref{eq1-1}--\eqref{eq4-1}. Then there exist two universal constants $C_1$ and $C_2$ such that the following a priori estimates hold:
\[
\frac{d}{dt}(\|u\|_{H^1}^2 +\| B\|_{H^1}^2) + \| \nabla u \|_{H^1}^2+\| \nabla B \|_{H^1}^2
\]
\bq\label{apriori1}
\leq C_1 (\| u\|_{L^p}^{\frac{2p}{p-3}} +\| \nabla B \|_{L^{\beta}}^{\frac{2\beta}{2\beta-3}})(\| u \|_{H^1}^2+\| B \|_{H^1}^2),
\eq
and
\[
\frac{d}{dt} (\| u\|_{H^m}^2+\| B \|_{H^m}^2)+ \| \nabla u \|_{H^m}^2+\| \nabla B \|_{H^m}^2
\]
\bq\label{apriori2}
\leq C_2 (1+ \| u\|_{L^p}^{\frac{2p}{p-3}} +\| \nabla B \|_{L^{\beta}}^{\frac{2\beta}{2\beta-3}}+\|  u \|_{L^{\infty}}^2+\|  B \|_{L^{\infty}}^2)(\|  u \|_{H^m}^2+\| B \|_{H^m}^2).
\eq
\end{proposition}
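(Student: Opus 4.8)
The plan is to run a standard higher-order energy argument on the symmetrized system \eqref{eq1-1}--\eqref{eq4-1}, taking advantage of incompressibility, the usual MHD cancellations, and the perpendicularity cancellation of the Hall term already exploited in Proposition \ref{prp1}. For \eqref{apriori1} I would apply $D^\alpha$ with $|\alpha|\le 1$ to the $u$- and $B$-equations, take the $L^2$ scalar product with $D^\alpha u$ and $D^\alpha B$ respectively, and sum. The viscous and resistive terms $\Delta u$, $\Delta B$ integrate by parts to produce $-\|\nabla u\|_{H^1}^2-\|\nabla B\|_{H^1}^2$ on the left-hand side; everything else must be absorbed either into these or into a Grönwall factor times $\|u\|_{H^1}^2+\|B\|_{H^1}^2$. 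The pressure term is harmless: pairing $D^\alpha\nabla(p+\tfrac12|B|^2)$ with $D^\alpha u$ and integrating by parts leaves $\int D^\alpha(p+\tfrac12|B|^2)\,D^\alpha(\nabla\cdot u)=0$ by \eqref{eq2-1}.

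Next I would dispose of the transport and coupling nonlinearities using their antisymmetric structure. For the advection term $(u\cdot\nabla)u$ the top-order piece cancels because $u$ is divergence free; concretely, at the $H^1$ level one integration by parts turns $\sum_i\int\partial_i[(u\cdot\nabla)u]\cdot\partial_i u$ into $-\int(u\cdot\nabla)u\cdot\Delta u$. Bounding this by $\|u\|_{L^p}\|\nabla u\|_{L^{p^*}}\|\nabla^2u\|_{L^2}$ with $\tfrac1{p^*}=\tfrac12-\tfrac1p$, interpolating $\|\nabla u\|_{L^{p^*}}\lesssim\|\nabla u\|_{L^2}^{1-3/p}\|\nabla^2u\|_{L^2}^{3/p}$, and applying Young's inequality produces exactly $\tfrac12\|\nabla^2u\|_{L^2}^2+C\|u\|_{L^p}^{2p/(p-3)}\|u\|_{H^1}^2$. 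The cross terms $(B\cdot\nabla)B$ (in the $u$-equation) and $(B\cdot\nabla)u$ (in the $B$-equation) cancel at top order after one integration by parts using $\nabla\cdot B=0$; the surviving commutators, e.g. $\int(\partial_iB\cdot\nabla)u\cdot\partial_iB$, are handled by the same Hölder–Gagliardo–Nirenberg–Young scheme and yield the $\|\nabla B\|_{L^\beta}$-contribution, the precise power of $\|\nabla B\|_{L^\beta}$ being dictated by how much of the dissipation $\|\nabla^2B\|_{L^2}^2$ is consumed by Young's inequality.

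The genuinely delicate term is the Hall term, and it is where I expect the main difficulty to lie. Since $\nabla\times((\nabla\times B)\times B)$ carries two derivatives of $B$ inside a curl, naively it competes with the resistive dissipation at top order. I would proceed exactly as in Proposition \ref{prp1}: writing $j=\nabla\times B$, move the outer curl onto $D^\alpha B$ so that the contribution becomes $\sum_{|\alpha|\le1}\int D^\alpha(j\times B)\cdot D^\alpha j$, and then use the pointwise perpendicularity identity $(D^\alpha j\times B)\cdot D^\alpha j=0$ to remove the single uncontrollable highest-order piece. What remains is the commutator $[D^\alpha,B\times]j$ tested against $D^\alpha j$; by the calculus (Leibniz) inequality of Proposition \ref{Besov} together with interpolation, this is bounded by $\|\nabla B\|_{L^\beta}$ times fractional powers of $\|B\|_{H^1}$ and $\|\nabla B\|_{H^1}$, and a final Young's inequality absorbs the dissipation and leaves a $\|\nabla B\|_{L^\beta}^{\theta}(\|u\|_{H^1}^2+\|B\|_{H^1}^2)$ term. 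Collecting all contributions gives \eqref{apriori1}.

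For \eqref{apriori2} the scheme is identical but with $|\alpha|\le m$. The only change is bookkeeping: the commutators are now of order $m$, so I would invoke the higher Kato–Ponce/Leibniz inequalities, which distribute the derivatives as $\|\nabla(\text{transport field})\|_{L^\infty}\|\cdot\|_{H^{m-1}}+\|\cdot\|_{H^m}\|(\text{other field})\|_{L^\infty}$, thereby generating the $\|u\|_{L^\infty}^2+\|B\|_{L^\infty}^2$ factors, while the terms in which a single derivative lands on the transporting or stretching field still reproduce the $\|u\|_{L^p}$ and $\|\nabla B\|_{L^\beta}$ Serrin factors. The top-order cancellations for advection, for the MHD coupling, and for the Hall term (again the identity $(D^\alpha j\times B)\cdot D^\alpha j=0$) persist verbatim, so the principal parts never exceed the available dissipation $\|\nabla u\|_{H^m}^2+\|\nabla B\|_{H^m}^2$, and the additive constant $1$ in \eqref{apriori2} absorbs the genuinely lower-order remainders already controlled by $\|u\|_{H^m}^2+\|B\|_{H^m}^2$. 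The hardest point throughout is the Hall term's borderline scaling: without the antisymmetry cancellation the second-order Hall contribution would not be dominated by the resistive term, so verifying that identity at every order $|\alpha|\le m$ is the crux of the estimate.
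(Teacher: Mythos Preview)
Your proposal is correct and follows essentially the same route as the paper: an energy estimate at order $|\alpha|\le 1$ (resp. $|\alpha|\le m$), the divergence-free cancellation for the transport terms, the MHD cancellation $\int (B\cdot\nabla)D^\alpha u\cdot D^\alpha B+\int (B\cdot\nabla)D^\alpha B\cdot D^\alpha u=0$, the Hall perpendicularity $(D^\alpha j\times B)\cdot D^\alpha j=0$, and then H\"older--Gagliardo--Nirenberg--Young to absorb half of the dissipation. The only cosmetic difference is that for the MHD cross terms at the $H^1$ level the paper integrates by parts once more to put both derivatives on $B$ and extracts a factor $\|u\|_{L^p}$ rather than $\|\nabla B\|_{L^\beta}$; either H\"older split closes the estimate.
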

\begin{proof}
First, we derive \eqref{apriori1}. Take operator $\nabla$ on equations \eqref{eq1-1} and \eqref{eq3-1}, respectively, take scalar product of them with $\nabla u$ and $\nabla B$, respectively and add them together, we obtain
\[
\frac12 \frac{d}{dt} (\| \nabla u \|_{L^2}^2 +\| \nabla B \|_{L^2}^2)+ \| \Delta u \|_{L^2}^2 +\| \Delta B \|_{L^2}^2
\]
\[
= -\int_{\R^3} \nabla ((\nabla \times B)\times B)\cdot \nabla (\nabla \times B) dx -\int_{\R^3} \nabla (u\cdot \nabla B) \cdot \nabla B dx
\]
\[
-\int_{\R^3} \nabla (u\cdot \nabla u)\cdot \nabla u dx+\int_{\R^3} \nabla(B \cdot \nabla u)\cdot \nabla B dx+\int_{\R^3} \nabla (B\cdot \nabla B) \cdot \nabla u dx
\]
\bq\label{i-one-five}
:=I_1+I_2+I_3+I_4+I_5.
\eq
We use the following cancellation for the estimate of $I_1$ as in \cite{Chae-Degond-Liu} and in Proposition \ref{prp1}
\[I_1=-\sum_{i=1}^3\int_{\R^3} [ \partial_i((\nabla \times B)\times B)-(\partial_i (\nabla \times B))\times B] \cdot \partial_i (\nabla \times B) dx\]
\[=-\sum_{i=1}^3 \int_{\R^3} (\nabla \times B)\times \partial_i B \cdot \partial_i (\nabla \times B) dx.\]

Using Young's inequality and the interpolation inequality, we obtain
\begin{eqnarray*}
|I_1| &\leq & C \| \nabla B \|_{L^{\beta}} \| \nabla B \|_{L^{\frac{2\beta}{\beta-2}}} \| \Delta B \|_{L^2}\\
&\leq & C\| \nabla B \|_{L^{\beta}} \| \nabla B \|_{L^2}^{\frac{\beta-3}{\beta}} \| \Delta B \|_{L^2}^{\frac{\beta+3}{\beta}}\\
&\leq & C\| \nabla B \|_{L^{\beta}}^{\frac{2\beta}{\beta-3}}\| \nabla B \|_{L^2}^2+\frac18 \| \Delta B \|_{L^2}^2.
\end{eqnarray*}
By integration by parts, we can rewrite and estimate the second and third terms on the right hand side of \eqref{i-one-five} as follows:

\begin{eqnarray*}
|I_2|& =&\left|\int_{\R^3} (u\cdot \nabla)B \cdot \Delta B dx\right|\\
 &\leq & C\| u\|_{L^p}\| \nabla B \|_{L^{\frac{2p}{p-2}}} \| \Delta B \|_{L^2}\\
&\leq & C\|u \|_{L^{p}} \| \nabla B \|_{L^2}^{\frac{p-3}{p}} \| \Delta B \|_{L^2}^{\frac{p+3}{p}}\\
&\leq & C\| u \|_{L^{p}}^{\frac{2p}{p-3}}\| \nabla B \|_{L^2}^2+\frac18 \| \Delta B \|_{L^2}^2,
\end{eqnarray*}
and
\begin{eqnarray*}
|I_3|&=&\left|\int_{\R^3} (u\cdot \nabla)u\cdot \Delta u dx\right|\\
 &\leq & C\| u\|_{L^p}\| \nabla u \|_{L^{\frac{2p}{p-2}}} \| \Delta u \|_{L^2}\\
&\leq & C\|u \|_{L^{p}} \| \nabla u \|_{L^2}^{\frac{p-3}{p}} \| \Delta u \|_{L^2}^{\frac{p+3}{p}}\\
&\leq & C\| u \|_{L^{p}}^{\frac{2p}{p-3}}\| \nabla u \|_{L^2}^2+\frac18 \| \Delta u \|_{L^2}^2,
\end{eqnarray*}
For the sum of $I_4$ and $I_5$, we use the following cancellation property :
\[
I_4+I_5=\sum_{i=1}^3 \int_{\R^3} (B \cdot \nabla)\partial_i u\cdot \partial_i B + (\partial_i B \cdot \nabla) u\cdot \partial_i B dx+\sum_{i=1}^3\int_{\R^3} (\partial_i B \cdot \nabla)B \cdot \partial_i u+(B \cdot\nabla) \partial_i B \cdot \partial_i u dx
\]
\[
=\sum_{i=1}^3\int_{\R^3}  (\partial_i B \cdot \nabla) u\cdot \partial_i B + (\partial_i B \cdot \nabla)B \cdot \partial_i udx\]
\[=-\sum_{i=1}^3 \int_{\R^3} (\partial_i B \cdot \nabla)\partial_i B \cdot u +(\partial_i^2 B\cdot\nabla)B \cdot u +(\partial_i B \cdot \nabla)\partial_i B \cdot u dx.
\]
Therefore, we estimate
\begin{eqnarray*}
|I_4+I_5|  &\leq & C\| u\|_{L^p}\| \nabla B \|_{L^{\frac{2p}{p-2}}} \| \Delta B \|_{L^2}\\
&\leq & C\|u \|_{L^{p}} \| \nabla B \|_{L^2}^{\frac{p-3}{p}} \| \Delta B \|_{L^2}^{\frac{p+3}{p}}\\
&\leq & C\| u \|_{L^{p}}^{\frac{2p}{p-3}}\| \nabla B \|_{L^2}^2+\frac18 \| \Delta B \|_{L^2}^2.
\end{eqnarray*}
Summing up the above estimates, we easily deduce \eqref{apriori1}.\\
Next, we derive \eqref{apriori2}.
Similarly to Proposition 3.2 in \cite{Chae-Degond-Liu} and \eqref{i-one-five}, we have the following for $m \geq2$
\[
\frac12 \frac{d}{dt} (\|  u \|_{H^m}^2 +\|  B \|_{H^m}^2)+ \| \nabla u \|_{H^m}^2 +\| \nabla B \|_{H^m}^2
\]
\[
= -\sum_{1\leq |\alpha| \leq m}\int_{\R^3} D^{\alpha} ((\nabla \times B)\times B)\cdot D^{\alpha} (\nabla \times B) dx -\sum_{1\leq |\alpha| \leq m}\int_{\R^3} D^{\alpha} (u\cdot \nabla B) \cdot D^{\alpha} B dx
\]
\[
-\sum_{1\leq |\alpha| \leq m}\int_{\R^3} D^{\alpha} (u\cdot \nabla u)\cdot D^{\alpha} u dx+\sum_{1\leq |\alpha| \leq m}\int_{\R^3} D^{\alpha}(B \cdot \nabla u)\cdot D^{\alpha} B dx\]
\[+\sum_{1\leq |\alpha| \leq m}\int_{\R^3}D^{\alpha} (B\cdot \nabla B) \cdot D^{\alpha} u dx
\]
\bq\label{i-one-five-2}
:=J_1+J_2+J_3+J_4+J_5,
\eq
where $\alpha=(\alpha_1, \alpha_2, \alpha_3)\in {\mathbb{N}}^3$ is a multi-index, $D^{\alpha}=\partial^{|\alpha|}/\partial x_1^{\alpha_1}\partial x_{2}^{\alpha_2}\partial x_3^{\alpha_3}$ and $|\alpha|=\alpha_1+\alpha_2+\alpha_3.$
We successively estimate $J_1,\cdots,J_5$. By the cancellation property such that
\[
J_1=-\sum_{2\leq |\alpha| \leq m}\int_{\R^3} \{D^{\alpha} [(\nabla \times B)\times B]-[D^{\alpha}(\nabla \times B)]\times B\}\cdot D^{\alpha} (\nabla \times B) dx
\]

From the calculus inequality, the interpolation inequality and Young's inequality, we have
\begin{eqnarray*}
|J_1| &\leq & C \| \nabla B \|_{L^{\beta}} \|  B \|_{W^{m, \frac{2\beta}{\beta-2}}} \| \nabla B \|_{H^m}\\
&\leq & C\| \nabla B \|_{L^{\beta}} \|  B \|_{H^m}^{\frac{\beta-3}{\beta}} \|  \nabla B \|_{H^m}^{\frac{\beta+3}{\beta}}\\
&\leq & C\| \nabla B \|_{L^{\beta}}^{\frac{2\beta}{\beta-3}}\|  B \|_{H^m}^2+\frac18 \|  \nabla B \|_{H^m}^2.
\end{eqnarray*}
Similarly, the other terms can be estimated from the Leibniz formula and Young's inequality :
\begin{eqnarray*}
|J_2| &\leq & C(\| u\|_{L^{\infty}}\| B\|_{H^m}+ \| B\|_{L^{\infty}} \| u \|_{H^m})\| \nabla B \|_{H^m}\\
&\leq & C( \| u\|_{L^{\infty}}^2+ \| B\|_{L^{\infty}}^2)(\| u \|_{H^m}^2+\| B\|_{H^m}^2) +\frac18 \| \nabla B \|_{H^m}^2,
\end{eqnarray*}
\[
|J_3| \leq  C( \| u\|_{L^{\infty}}^2+ \|u\|_{L^{\infty}}^2)(\| u \|_{H^m}^2+\|u\|_{H^m}^2) +\frac18 \| \nabla u \|_{H^m}^2,
\]
\[
|J_4| \leq  C( \| u\|_{L^{\infty}}^2+ \|B\|_{L^{\infty}}^2)(\| u \|_{H^m}^2+\|B\|_{H^m}^2) +\frac18 \| \nabla B \|_{H^m}^2,
\]
and
\[
|J_5| \leq  C \| B\|_{L^{\infty}}^2\| B \|_{H^m}^2 +\frac18 \| \nabla u \|_{H^m}^2,
\]
Collecting all the estimates together, we deduce \eqref{apriori2}. This completes the proof.
\end{proof}
We can now prove Theorem \ref{thm1} and Theorem \ref{thm2}.
\begin{pfthm1}
Using Gronwall's Lemma to \eqref{apriori1}, we have the following inequality for all $T\leq T^{*}$
\[
\sup_{ 0<t <T} (\| u(t)\|_{H^1}^2+\| B(t)\|_{H^1}^2) +\int_0^T \| \nabla u\|_{H^1}^2 +\| \nabla B \|_{H^1}^2 dt
\]
\bq\label{gron-1}
\leq (\| u_0 \|_{H^1}^2 +\| B_0\|_{H^1}^2) (1+C_1 \int_0^T \| u\|_{L^p}^{\frac{2p}{p-3}} + \| \nabla B \|_{L^{\beta}}^{\frac{2\beta}{2\beta-3}} )\exp \left( C_1 \int_0^T  \| u\|_{L^p}^{\frac{2p}{p-3}} + \| \nabla B \|_{L^{\beta}}^{\frac{2\beta}{2\beta-3}} \right).
\eq
Again, using Gronwall's Lemma to \eqref{apriori2}, we obtain
\[
\sup_{ 0<t <T} (\| u(t)\|_{H^m}^2+\| B(t)\|_{H^m}^2) +\int_0^T \| \nabla u\|_{H^m}^2 +\| \nabla B \|_{H^m}^2 dt
\]
\[
\leq (\| u_0 \|_{H^m}^2 +\| B_0\|_{H^m}^2) (1+C_2 \int_0^T \| u\|_{L^p}^{\frac{2p}{p-3}} + \| \nabla B \|_{L^{\beta}}^{\frac{2\beta}{2\beta-3}}+\| u\|_{L^{\infty}}^2+\| B \|_{L^{\infty}}^2 dt)\]
\bq\label{gron-2}\times \exp \left( C_2 \int_0^T  \| u\|_{L^p}^{\frac{2p}{p-3}} + \| \nabla B \|_{L^{\beta}}^{\frac{2\beta}{2\beta-3}}+\| u\|_{L^{\infty}}^2+\| B \|_{L^{\infty}}^2 dt\right).
\eq
The interpolation inequality  $\| f\|_{L^{\infty}} \leq C \| f\|_{L^2}^{\frac14} \| \nabla f \|_{H^1}^{\frac34}$ produces us with
\begin{equation}\label{2-7}
\int_0^T \| u \|_{L^{\infty}}^2 dt \leq C \sup_{0\leq t\leq T} \| u(t) \|_{L^2}^{\frac12} \int_0^T \| \nabla u \|_{H^1}^{\frac32} dt \leq C \| u_0 \|_{L^2}^{\frac12} \left( \int_0^T \| \nabla u \|_{H^1}^2 dt  \right)^{\frac34}T^{\frac14},
\end{equation}
and
\begin{equation}\label{2-8}
\int_0^T \| B \|_{L^{\infty}}^2 dt \leq C \sup_{0\leq t\leq T} \|B (t)\|_{L^2}^{\frac12} \int_0^T \| \nabla B\|_{H^1}^{\frac32} dt\leq  C \| B_0 \|_{L^2}^{\frac12} \left( \int_0^T \| \nabla B\|_{H^1}^2 dt  \right)^{\frac34}T^{\frac14}.
\end{equation}
From \eqref{gron-1}--\eqref{2-8}, we obtain that if
\[
\int_0^{T^{*}}  \| u\|_{L^p}^{\frac{2p}{p-3}} + \| \nabla B \|_{L^{\beta}}^{\frac{2\beta}{2\beta-3}} dt< \infty,\mbox{ and } u_0,\,\, B_0 \in H^m(\R^3),
\]
then
\[
\sup_{ 0<t <T^{*}} (\| u(t)\|_{H^m}^2+\| B(t)\|_{H^m}^2) +\int_0^T \| \nabla u\|_{H^m}^2 +\| \nabla B \|_{H^m}^2 dt< \infty.
\]
Since $\frac{2p}{2p-3} \leq q$ and $\frac{2\beta}{2\beta-3} \leq \gamma$, it completes the proof of Theorem \ref{thm1}.
\end{pfthm1}
\begin{pfthm2}
First, we recall the logarithmic Sobolev inequality using BMO space (see \cite[Corollary 2.4]{ogawa}).
\bq\label{ogawa-ineq}
\| \nabla B \|_{L^{\infty}} \leq C(q) \left(1+\| \nabla B \|_{BMO}\left(\ln^{+}( \| \nabla B \|_{W^{1,q}} +\| B \|_{L^{\infty}})\right)^{\frac12}   \right),
\eq
if $\nabla f \in W^{1,q}(\R^3)\cap L^2(\R^3)$ for $3<q$. Since $H^{m-1} (\R^3) \hookrightarrow W^{1,q}(\R^3)$ for some $q>3$ when $m\geq 3$ and $H^m (\R^3) \hookrightarrow L^{\infty}(\R^3)$, we have
\bq\label{ogawa-ineq-2}
\| \nabla B \|_{L^{\infty}} \leq C \left(1+\| \nabla B \|_{BMO}  \ln^{\frac12}(e+ \| B \|_{H^m})\right).
\eq
We estimate each term $J_1,\cdots, J_5$ in \eqref{i-one-five-2} for an integer $m \geq 3$.
Using cancellation property, Young's inequality and \eqref{ogawa-ineq-2}, we have
\begin{eqnarray}
|J_1| &\leq & C \| \nabla B \|_{L^{\infty}} \| B \|_{H^m} \| \nabla B \|_{H^m} \leq C \| \nabla B \|_{L^{\infty}}^2 \| B \|_{H^m}^2 +\frac18 \| \nabla B \|_{H^m}^2\nonumber\\
&\leq & C\left(1+ \| \nabla B \|_{BMO}^2 \ln (e+ \| B \|_{H^m}) \right) \| B \|_{H^m}^2 +\frac18 \| \nabla B \|_{H^m}^2.\label{bmo-1}
\end{eqnarray}
We recall the bilinear estimates in BMO space (see \cite[Lemma1]{KT}).
\bq\label{kt-1}
\| \partial^{\alpha} f \cdot \partial^{\beta} g \|_{L^2} \leq C( \| f\|_{BMO} \| (-\Delta)^{\frac{|\alpha|+|\beta|}{2}} g \|_{L^2}+ \| (-\Delta)^{\frac{|\alpha|+|\beta|}{2}} f \|_{L^2} \| g\|_{BMO}),
\eq
for all $f,\, g \in BMO\cap H^{|\alpha|+|\beta|}$, when $\alpha=(\alpha_1,\, \alpha_2,\, \alpha_3)$ and $\beta=(\beta_1,\, \beta_2,\, \beta_3)$ are multi-indices with $|\alpha|,\, |\beta| \geq 1$.
Using the cancellation property \[\int_{\R^3}  (u\cdot \nabla) D^{\alpha}B \cdot D^{\alpha} B dx = \int_{\R^3}  (u\cdot \nabla) D^{\alpha}u \cdot D^{\alpha} u dx=0,\] and \eqref{kt-1},
we have
\begin{eqnarray}
|J_2| &=& \left|\sum_{1\leq |\alpha| \leq m}\int_{\R^3}[ D^{\alpha} (u\cdot \nabla B) \cdot D^{\alpha} B-(u\cdot \nabla) D^{\alpha}B \cdot D^{\alpha} B] dx    \right|\nonumber\\
&\leq& C( \| u \|_{BMO} \| \nabla B \|_{H^m} + \| B \|_{BMO} \| \nabla u \|_{H^m}) \| B \|_{H^m}\nonumber\\
&\leq & C(\| u \|_{BMO}^2+ \| B \|_{BMO}^2)\| B \|_{H^m}^2 +\frac18 (\| \nabla u \|_{H^m}^2+\| \nabla B \|_{H^m}^2),\label{bmo-2}
\end{eqnarray}
and
\begin{eqnarray}
|J_2| &=& \left|\sum_{1\leq |\alpha| \leq m}\int_{\R^3}[ D^{\alpha} (u\cdot \nabla u) \cdot D^{\alpha} u-(u\cdot \nabla) D^{\alpha}u \cdot D^{\alpha} u] dx    \right|\nonumber\\
&\leq& C \| u \|_{BMO} \| \nabla u \|_{H^m}  \| u \|_{H^m}\nonumber\\
&\leq & C\| u \|_{BMO}^2\| u \|_{H^m}^2 +\frac18 \| \nabla u \|_{H^m}^2.\label{bmo-3}
\end{eqnarray}
Using the cancellation property
\[
\int_{\R^3} (B \cdot \nabla) D^{\alpha}u\cdot D^{\alpha} B dx+\int_{\R^3} (B\cdot \nabla) D^{\alpha}B \cdot D^{\alpha} u dx=0,
\]
we rewrite
\[
J_4+J_5 = \sum_{1\leq |\alpha| \leq m}\int_{\R^3} [D^{\alpha}(B \cdot \nabla u)\cdot D^{\alpha} B-(B \cdot \nabla) D^{\alpha}u\cdot D^{\alpha} B] dx\]
\[+\sum_{1\leq |\alpha| \leq m}\int_{\R^3}[D^{\alpha} (B\cdot \nabla B) \cdot D^{\alpha} u-(B\cdot \nabla) D^{\alpha}B \cdot D^{\alpha} u] dx.
\]
Then, using \eqref{kt-1}, we deduce that
\[
|J_4+J_5| \leq  C(\| B \|_{BMO} \| \nabla u \|_{H^m} + \| u \|_{BMO} \| \nabla B \|_{H^m}) \| B \|_{H^m}+C\| B \|_{BMO} \| \nabla B \|_{H^m}\| u \|_{H^m}\]
\bq\leq  C(\| u \|_{BMO}^2+\| B \|_{BMO}^2)(\| u \|_{H^m}^2+\| B \|_{H^m}^2) +\frac18( \| \nabla u \|_{H^m}^2+ \| \nabla B \|_{H^m}^2).\label{bmo-4}
\eq
Using \eqref{bmo-1}, \eqref{bmo-2}, \eqref{bmo-3} and \eqref{bmo-4},
inequality \eqref{i-one-five-2} can be rewritten as
\[
\frac{d}{dt} (\| u\|_{H^m}^2+\| B \|_{H^m}^2)+ \| \nabla u \|_{H^m}^2+\| \nabla B \|_{H^m}^2
\]
\bq\label{apriori3}
\leq C (1+ \| u\|_{BMO}^{2}+\| B \|_{BMO}^2 +\| \nabla B \|_{BMO}^2\ln (e+ \| B \|_{H^m}))(\|  u \|_{H^m}^2+\| B \|_{H^m}^2).
\eq

 Let $X(t)= e+\| u\|_{H^m}^2+\| B\|_{H^m}^2$. Then we rewrite \eqref{apriori3} into
 \[
 \frac{d}{dt}X(t) \leq C(1+ \| u\|_{BMO}^2+\| \nabla B\|_{BMO}^2)X(t) \ln (e+X(t)).
 \]
 Using Gronwall type Lemma, we obtain
 \[
 \sup_{0\leq t\leq T}  X(t) \leq (e+ \| u_0 \|_{H^m}^2 +\| B_0 \|_{H^m}^2) \exp \left( C\exp \left(\int_0^{T} \| u\|_{BMO}^2+\| \nabla B\|_{BMO}^2dt    \right)\right).
 \]
 This completes the proof of Theorem \ref{thm2}.
\end{pfthm2}
\section{Small data global existence}
\begin{pfthm3}
Denote $\Lambda=(-\Delta )^{\frac12}$. If we take operator $\Lambda^{\frac12}$ on the both sides of \eqref{eq1-1} and \eqref{eq3-1}, take scalar product with $\Lambda^{\frac12} u$ and $\Lambda^{\frac12} B$, respectively, and add these to obtain that
\begin{eqnarray}
 &&\frac12 \frac{d}{dt} (\| \Lambda^{\frac12} u \|_{L^2}^2+\| \Lambda^{\frac12} B \|_{L^2}^2) +\| \Lambda^{\frac32} u \|_{L^2}^2 +\| \Lambda^{\frac32} B \|_{L^2}^2 \nonumber\\
 && \leq C (\|  (u \cdot \nabla) u \|_{L^{\frac32}}\| \Lambda u\|_{L^3}+\| (B \cdot \nabla)B \|_{L^{\frac32}}\| \Lambda u \|_{L^3}\nonumber\\
 && +\|  (u \cdot \nabla) B\|_{L^{\frac32}}\| \Lambda B\|_{L^3}+\|  (B \cdot \nabla) u \|_{L^{\frac32}}\| \Lambda B\|_{L^3})\nonumber\\
 &&+ C\| \nabla \times B \|_{L^6} \| B \|_{L^3} \| \Lambda \nabla \times B \|_{L^2}\nonumber\\
 &&\leq C( \| \Lambda^{\frac12} u \|_{L^2}+ \| \Lambda^{\frac12} B \|_{L^2})\left(\| \Lambda^{\frac32} u \|_{L^2}^2 +\| \Lambda^{\frac32} B \|_{L^2}^2  \right)\nonumber\\
 &&+C \| \Lambda^{\frac12} B \|_{L^2} \| \Lambda^{\frac52} B \|_{L^2}^2. \label{ineq-00}
\end{eqnarray}
    If we take operator $\Lambda^{\frac32}$ on the both sides of \eqref{eq1-1} and \eqref{eq3-1} and take scalar product with $\Lambda^{\frac32} u$ and $\Lambda^{\frac32} B$, respectively, we deduce that
\begin{eqnarray}
&&\frac12 \frac{d}{dt} \| \Lambda^{\frac32} u \|_{L^2}^2 + \| \Lambda^{\frac52} u \|_{L^2}^2 \nonumber\\
&& \leq -\int_{\R^3} \{ \Lambda^{\frac32} [(u\cdot \nabla)u]-(u\cdot\nabla) \Lambda^{\frac32} u\} \cdot \Lambda^{\frac32} u dx\nonumber\\
&& +\int_{\R^3} \{ \Lambda^{\frac32} [(B\cdot \nabla)B]-(B\cdot\nabla) \Lambda^{\frac32} B\} \cdot \Lambda^{\frac32} u dx+\int_{\R^3}[(B \cdot \nabla)\Lambda^{\frac32} B ]\cdot \Lambda^{\frac32} u dx,\label{ineq-10}
\end{eqnarray}
and
\begin{eqnarray}
&&\frac12 \frac{d}{dt} \| \Lambda^{\frac32} B \|_{L^2}^2 + \| \Lambda^{\frac52} B \|_{L^2}^2 \nonumber\\
&& \leq -\int_{\R^3} \{ \Lambda^{\frac32} [(u\cdot \nabla)B]-(u\cdot\nabla) \Lambda^{\frac32} B\} \cdot \Lambda^{\frac32} B dx\nonumber\\
&& +\int_{\R^3} \{ \Lambda^{\frac32} [(B\cdot \nabla)u]-(B\cdot\nabla) \Lambda^{\frac32} u\} \cdot \Lambda^{\frac32} B dx+\int_{\R^3}[(B \cdot \nabla)\Lambda^{\frac32} u ]\cdot \Lambda^{\frac32} B dx\nonumber\\
&&-\int_{\R^3} \{ \Lambda^{\frac32} [(\nabla\times B)\times B]-(\Lambda^{\frac32} \nabla\times B)\times B\} \cdot \Lambda^{\frac32}\nabla \times  B dx.\label{ineq-20}
\end{eqnarray}
Adding \eqref{ineq-10} and \eqref{ineq-20}, and using the fact that
\[
\int_{\R^3}[(B \cdot \nabla)\Lambda^{\frac32} B ]\cdot \Lambda^{\frac32} u dx+\int_{\R^3}[(B \cdot \nabla)\Lambda^{\frac32} u ]\cdot \Lambda^{\frac32} B dx=0,
\]
We obtain
\begin{eqnarray}
&&\frac12 \frac{d}{dt} (\| \Lambda^{\frac32} u \|_{L^2}^2 +\| \Lambda^{\frac32} B \|_{L^2}^2)+\|\Lambda^{\frac52} u \|_{L^2}^2+\| \Lambda^{\frac52} B \|_{L^2}^2\nonumber\\
&& \leq  \| \Lambda^{\frac32} [(u \cdot \nabla) u]-(u \cdot \nabla) \Lambda^{\frac32} u \|_{L^2} \| \Lambda^{\frac32} u \|_{L^2}\nonumber\\
&&+\| \Lambda^{\frac32} [(u \cdot \nabla) B]-(u \cdot \nabla) \Lambda^{\frac32} B \|_{L^2} \| \Lambda^{\frac32} B \|_{L^2}\nonumber\\
&&+\| \Lambda^{\frac32} [(B \cdot \nabla) B]-(B \cdot \nabla) \Lambda^{\frac32} B \|_{L^2} \| \Lambda^{\frac32} u \|_{L^2}\nonumber\\
&&+\| \Lambda^{\frac32} [(B \cdot \nabla) u]-(B \cdot \nabla) \Lambda^{\frac32} u \|_{L^2} \| \Lambda^{\frac32} B \|_{L^2}\nonumber\\
&&+\| \Lambda^{\frac32} [( \nabla \times B)\times B]-(\Lambda^{\frac32} \nabla\times B)\times B \|_{L^2} \| \Lambda^{\frac32}\nabla \times  B \|_{L^2}.\label{ineq-30}
\end{eqnarray}
We recall the commutator estimate(\cite{kpv})
\[
\| \Lambda^{s}(fg) -f \Lambda^s g \|_{L^p} \leq C( \| \nabla f \|_{L^{q_1}} \| \Lambda^{s-1} g \|_{L^{r_1}} +\| \Lambda^{s} f \|_{L^{q_2}} \| g\|_{L^{r_2}},
\]
where $\frac{1}{p}=\frac{1}{q_i}+\frac{1}{r_i}$, $i=1,2$ and $ p,\, q_i ,\, r_i \in [1, \infty]$.\\
Using above commutator estimate together with the Sobolev inequalities, we deduce
\[
\| \Lambda^{\frac32} [(u \cdot \nabla) u]-(u \cdot \nabla) \Lambda^{\frac32} u \|_{L^2} \leq C (\| \Lambda^{\frac32} u \|_{L^6} \| \nabla u\|_{L^3} + \| \Lambda^{\frac12} u \|_{L^6} \| \Lambda^2 u \|_{L^6})\leq C\| \Lambda^{\frac52} u \|_{L^2} \| \Lambda^{\frac32} u \|_{L^2},
\]
\[
\| \Lambda^{\frac32} [(u \cdot \nabla) B]-(u \cdot \nabla) \Lambda^{\frac32} B \|_{L^2} \leq C(\| \Lambda^{\frac52} u \|_{L^2} \| \Lambda^{\frac32} B \|_{L^2}+\| \Lambda^{\frac52} B \|_{L^2} \| \Lambda^{\frac32} u \|_{L^2}),
\]
\[
\| \Lambda^{\frac32} [(B \cdot \nabla) B]-(B \cdot \nabla) \Lambda^{\frac32} B \|_{L^2} \leq C\| \Lambda^{\frac52} B \|_{L^2} \| \Lambda^{\frac32} B \|_{L^2},
\]
\[
\| \Lambda^{\frac32} [(B \cdot \nabla) u]-(B \cdot \nabla) \Lambda^{\frac32} u \|_{L^2} \leq C(\| \Lambda^{\frac52} u \|_{L^2} \| \Lambda^{\frac32} B \|_{L^2}+\| \Lambda^{\frac52} B \|_{L^2} \| \Lambda^{\frac32} u \|_{L^2}),
\]
and
\[
\| \Lambda^{\frac32} [(\nabla \times B)\times B]-(\Lambda^{\frac32}\nabla\times B) \times B \|_{L^2} \leq C\| \Lambda^{\frac52} B \|_{L^2} \| \Lambda^{\frac32} B \|_{L^2}.
\]
Hence we obtain that
\begin{eqnarray}
&&\frac12 \frac{d}{dt} (\| \Lambda^{\frac32} u \|_{L^2}^2 +\| \Lambda^{\frac32} B \|_{L^2}^2)+\|\Lambda^{\frac52} u \|_{L^2}^2+\| \Lambda^{\frac52} B \|_{L^2}^2\nonumber\\
&& \leq C(\| \Lambda^{\frac32} u \|_{L^2} +\| \Lambda^{\frac32} B \|_{L^2})\left(\|\Lambda^{\frac52} u \|_{L^2}^2+\| \Lambda^{\frac52} B \|_{L^2}^2 +\| \Lambda^{\frac32} u \|_{L^2}^2 +\| \Lambda^{\frac32} B \|_{L^2}^2 \right).\label{ineq-40}
\end{eqnarray}
Adding \eqref{ineq-00} and \eqref{ineq-40},  we have
\[
\frac12 \frac{d}{dt} (\| \Lambda^{\frac12} u(t) \|_{L^2}^2+\| \Lambda^{\frac12} B(t) \|_{L^2}^2+\| \Lambda^{\frac32} u(t) \|_{L^2}^2+\| \Lambda^{\frac32} B(t) \|_{L^2}^2)
\]
\[
+\left(1-C\left(\| \Lambda^{\frac12} u(t) \|_{L^2}+\| \Lambda^{\frac12} B(t) \|_{L^2}+\| \Lambda^{\frac32} u(t) \|_{L^2}+\| \Lambda^{\frac32} B(t) \|_{L^2}\right)\right)
\]
\[
\times \left( \| \Lambda^{\frac32} u(t) \|_{L^2}^2+\| \Lambda^{\frac32} B(t) \|_{L^2}^2+\| \Lambda^{\frac52} u(t) \|_{L^2}^2+\| \Lambda^{\frac52} B(t) \|_{L^2}^2                \right)\leq 0.
\]
Choosing $K$ so small that (by interpolation of $H^{\frac12}$ between $\dot{H}^{\frac32}$ and $L^2$)
\[
C\left( \| \Lambda^{\frac12}u_0 \|_{L^2}+\| \Lambda^{\frac12}B_0 \|_{L^2}+\| \Lambda^{\frac32}u_0 \|_{L^2}+\| \Lambda^{\frac32}B_0 \|_{L^2}\right)\leq \frac12,
\]
then we have for any $T\in (0, T^{*})$ ($T^{*}$ is the maximal time of existence of $H^m$ solution),
\[
(u,\, B) \in L^{\infty}(0, \, T; H^{\frac32}) \cap L^2( 0,\, T ; H^{\frac52}).
\]
Using the fact that
\[
\dot{H}^{\frac32}(\R^3)\hookrightarrow BMO(\R^3),
\]
and  the above estimates, we have
\[
\nabla u,\,\, \nabla B \in  L^2(0, T; , BMO),\mbox{ for all }T \in (0, T^{*}),
\]
which satisfies the integrability condition in Theorem \ref{thm2}. Consequently, using the continuation argument, it completes the proof of Theorem \ref{thm3}.
\end{pfthm3}
\begin{pfthm4}
For the simplicity, we set
\[
\pi=p+\frac{|B|^2}{2}.
\]
Applying operator $\Delta_q$ to \eqref{eq1-1} and \eqref{eq3-1}, respectively, we infer that
\begin{eqnarray}
&& \partial_t \Delta_q u +(u \cdot \nabla)\Delta_q u-\Delta \Delta_q u+\nabla \Delta_q \pi\nonumber\\
&&=-[\Delta_q,\, u]\cdot \nabla u+\Delta_q (B\cdot \nabla B),\label{deltaq1}
\end{eqnarray}
and
\begin{eqnarray}
&& \partial_t \Delta_q B +(u \cdot \nabla)\Delta_q B-\Delta \Delta_q B+\nabla \times\Delta_q (j \times B)\nonumber\\
&&=-[\Delta_q,\, u]\cdot \nabla B+\Delta_q(B \cdot \nabla u),\label{deltaq2}
\end{eqnarray}
Let $T^{*}$ be the maximal time of existence of solution such that $(u(t), B(t))\in H^m(\R^3)$, $m >\frac52$ for all $t \in [0, T^{*})$.
Multiplying $\Delta_q u$ and $\Delta_q B$ to the both sides of \eqref{deltaq1} and \eqref{deltaq2}, respectively, and integrating over $\R^3$, we have for $t \in (0,\, T^{*})$
\begin{eqnarray}
&&\frac12 \frac{d}{dt} \| \Delta_q u \|_{L^2}^2 +C 2^{2q} \| \Delta_q u \|_{L^2}^2\nonumber\\
&&\leq \| [\Delta_q, u]\cdot \nabla u\|_{L^2} \| \Delta_q u \|_{L^2}+\| \Delta_q( B\cdot \nabla B) \|_{L^2}\| \Delta_q u\|_{L^2},\label{ener-1}
\end{eqnarray}
and
\begin{eqnarray}
&&\frac12  \frac{d}{dt} \| \Delta_q B \|_{L^2}^2 +C 2^{2q} \| \Delta_q B\|_{L^2}^2\leq \| [\Delta_q, u]\cdot \nabla B\|_{L^2} \| \Delta_q B \|_{L^2}\nonumber\\&&+C2^q\| \Delta_q( j\times B) \|_{L^2}\| \Delta_q B\|_{L^2} +\| \Delta_q (B \cdot \nabla u) \|_{L^2} \|\Delta_q B\|_{L^2},\label{ener-2}
\end{eqnarray}
Dividing both sides of \eqref{ener-1} and \eqref{ener-2} by $\|\Delta_q u\|_{L^2}$ and $\| \Delta_q B \|_{L^2}$, respectively and adding these, we obtain that
\begin{eqnarray}
&&\frac{d}{dt} ( \| \Delta_q u \|_{L^2} +\| \Delta_q B \|_{L^2}) +C 2^{2q} (\| \Delta_q u \|_{L^2}+\| \Delta_q B \|_{L^2})\nonumber\\
&&\leq \| [\Delta_q, u]\cdot \nabla u\|_{L^2} +\| \Delta_q( B\cdot \nabla B) \|_{L^2} + \| [\Delta_q, u]\cdot \nabla B\|_{L^2}\nonumber\\&&+C2^q\| \Delta_q( j\times B) \|_{L^2}+\| \Delta_q (B \cdot \nabla u) \|_{L^2}.\label{ener-3}
\end{eqnarray}

Multiplying $2^{\frac{q}{2}}$ and integrating over $[0, t]$ with $t\leq T^{*}-\delta$ for any $\delta>0$, we have
\begin{eqnarray}
&& 2^{\frac{q}{2}}(\| \Delta_q u(t)\|_{L^2}+\|\Delta_q B(t)\|_{L^2})+C_1\int_0^t2^{\frac{5q}{2}}( \|\Delta_q u(s)\|_{L^2}+\| \Delta_q B(s)\|_{L^2})ds\nonumber\\
&& \leq \int_0^t2^{\frac{q}{2}}\| [\Delta_q, u]\cdot \nabla u\|_{L^2}ds +\int_0^t2^{\frac{q}{2}}\| \Delta_q( B\cdot \nabla B) \|_{L^2}ds + \int_0^t2^{\frac{q}{2}}\| [\Delta_q, u]\cdot \nabla B\|_{L^2}ds\nonumber\\&&+C\int_0^t2^{\frac{3q}{2}}\| \Delta_q( j\times B) \|_{L^2}ds+\int_0^t2^{\frac{q}{2}}\| \Delta_q (B \cdot \nabla u) \|_{L^2}ds+2^{\frac{q}{2}}(\| \Delta_q u_0\|_{L^2}+\|\Delta_q B_0\|_{L^2}).\nonumber
\end{eqnarray}

Taking summation over $q \in {\mathbb{Z}}$, we have
\begin{eqnarray}
&& (\| u(t)\|_{\dot{B}^{\frac12}_{2,1}}+\| B(t)\|_{\dot{B}^{\frac12}_{2,1}})+C_1\int_0^t( \| u(s)\|_{\dot{B}^{\frac52}_{2,1}}+\| B(s)\|_{\dot{B}^{\frac52}_{2,1}})ds\nonumber\\
&& \leq \int_0^t\| B \cdot \nabla B \|_{\dot{B}^{\frac12}_{2,1}}ds+\int_0^t\| B \cdot \nabla u \|_{\dot{B}^{\frac12}_{2,1}}ds+ C\int_0^t\| j \times B \|_{\dot{B}^{\frac32}_{2,1}}ds\nonumber\\
&& + \int_0^t\sum_{q} 2^{\frac{q}{2}} \| [\Delta_q, u]\cdot \nabla u\|_{L^2}ds+\int_0^t\sum_{q} 2^{\frac{q}{2}} \| [\Delta_q, u]\cdot \nabla B\|_{L^2}ds\nonumber\\
&&+(\| u_0\|_{\dot{B}^{\frac12}_{2,1}}+\| B_0\|_{\dot{B}^{\frac12}_{2,1}}).\label{ener-4}
\end{eqnarray}
Using (iii) of Proposition \ref{Besov}, we have
\[
\| B \cdot \nabla B \|_{\dot{B}^{\frac12}_{2,1}}\leq C \| B\|_{\dot{B}^{\frac12}_{2,1}}\| \nabla B \|_{\dot{B}^{\frac32}_{2,1}},
\]
\[
\| B \cdot \nabla u \|_{\dot{B}^{\frac12}_{2,1}}\leq C \| B\|_{\dot{B}^{\frac12}_{2,1}}\| \nabla u \|_{\dot{B}^{\frac32}_{2,1}},
\]
and
\[
\| j \times B \|_{\dot{B}^{\frac32}_{2,1}}\leq C \| j\|_{\dot{B}^{\frac32}_{2,1}}\|  B \|_{\dot{B}^{\frac32}_{2,1}}.
\]

From Proposition \ref{Besov} (iv), we obtain
\[
\| [\Delta_q, u]\cdot \nabla u\|_{L^2} \leq c_q 2^{-\frac{q}{2}} \| u \|_{\dot{B}^{\frac52}_{2,1}}\|\nabla u \|_{\dot{B}^{-\frac12}_{2,1}},
\]
and
\[
\| [\Delta_q, u]\cdot \nabla B\|_{L^2} \leq c_q 2^{-\frac{q}{2}} \| u \|_{\dot{B}^{\frac52}_{2,1}}\|\nabla B \|_{\dot{B}^{-\frac12}_{2,1}}.
\]
Then the right hand side of \eqref{ener-4} are estimated as follows :
\begin{eqnarray}
&& (\| u(t)\|_{\dot{B}^{\frac12}_{2,1}}+\| B(t)\|_{\dot{B}^{\frac12}_{2,1}})+C_1\int_0^t( \| u(s)\|_{\dot{B}^{\frac52}_{2,1}}+\| B(s)\|_{\dot{B}^{\frac52}_{2,1}})ds\nonumber\\
&& \leq  C \int_0^t(\| u(s)\|_{\dot{B}^{\frac12}_{2,1}}+\| B(s)\|_{\dot{B}^{\frac12}_{2,1}})(\| u (s)\|_{\dot{B}^{\frac52}_{2,1}}+\|  B(s) \|_{\dot{B}^{\frac52}_{2,1}})ds +C \int_0^t\|  B \|_{\dot{B}^{\frac32}_{2,1}}\| j\|_{\dot{B}^{\frac32}_{2,1}}ds\nonumber\\
&& +(\| u_0\|_{\dot{B}^{\frac12}_{2,1}}+\| B_0\|_{\dot{B}^{\frac12}_{2,1}}).\label{ener-5}
\end{eqnarray}
To estimate the term $\int_0^t\|  B \|_{\dot{B}^{\frac32}_{2,1}}\| j\|_{\dot{B}^{\frac32}_{2,1}}ds$ of \eqref{ener-5}, we rewrite \eqref{eq3-1} as
\[
\partial_t B +(u\cdot\nabla )B+(B\cdot\nabla)j-(j \cdot\nabla)B=(B\cdot \nabla)u+\Delta B.
\]
Applying operator $\Delta_q$ to above, we infer that
\begin{eqnarray}\label{ener-6}
&&\partial_t \Delta_q B +(u \cdot \nabla)\Delta_q B-\Delta \Delta_q B+\Delta_q (B\cdot\nabla j) -(j\cdot\nabla)\Delta_q B\nonumber\\&&=-[\Delta_q,\, u]\cdot \nabla B+\Delta_q(B \cdot \nabla u)+[\Delta_q, j]\cdot \nabla B.
\end{eqnarray}
Multiplying $\Delta_q B$ on the both sides of \eqref{ener-6}, integrating over $\R^3$, we deduce that
\[
\frac12\frac{d}{dt} \| \Delta_q B \|_{L^2}^2 +C 2^{2q} \| \Delta_q B\|_{L^2}^2\leq \| [\Delta_q, u]\cdot \nabla B\|_{L^2} \| \Delta_q B \|_{L^2}
\]
\[
+\| \Delta_q (B \cdot \nabla j)\|_{L^2}\| \Delta_q B \|_{L^2} +\| \Delta_q (B \cdot \nabla u)\|_{L^2}\| \Delta_q B \|_{L^2} +\| [\Delta_q, j]\cdot \nabla B\|_{L^2}\| \Delta_q B \|_{L^2}.
\]
Dividing above by $\| \Delta_q B \|_{L^2}$, multiplying $2^{\frac{3q}{2}}$ and integrating over $[0, t]$ with $t\leq T^{*}-\delta$ for any $\delta>0$, we have
\begin{eqnarray}
&& 2^{\frac{3q}{2}}\|\Delta_q B(t)\|_{L^2}+C_1\int_0^t2^{\frac{7q}{2}}\| \Delta_qB(s)\|_{L^2}ds \leq \int_0^t2^{\frac{3q}{2}}\| \Delta_q (B \cdot \nabla j)\|_{L^2}ds\nonumber\\
&& +\int_0^t2^{\frac{3q}{2}}\| \Delta_q (B \cdot \nabla u)\|_{L^2}ds+ \int_0^t 2^{\frac{3q}{2}} \| [\Delta_q, u]\cdot \nabla B\|_{L^2}ds\nonumber\\&&+\int_0^t 2^{\frac{3q}{2}} \| [\Delta_q, j]\cdot \nabla B\|_{L^2}ds+2^{\frac{3q}{2}}\|\Delta_q B_0\|_{L^2}.\label{ener-7}
\end{eqnarray}

Taking summation over $q \in {\mathbb{Z}}$, we obtain that
\begin{eqnarray}
&& \| B(t)\|_{\dot{B}^{\frac32}_{2,1}}+C_1\int_0^t\| B(s)\|_{\dot{B}^{\frac72}_{2,1}} ds\leq C\int_0^t\| B \cdot \nabla j \|_{\dot{B}^{\frac32}_{2,1}}+C\| B \cdot \nabla u \|_{\dot{B}^{\frac32}_{2,1}}ds\nonumber\\
&& + \int_0^t\sum_{q} 2^{\frac{3q}{2}} \| [\Delta_q, u]\cdot \nabla B\|_{L^2}+\sum_{q} 2^{\frac{3q}{2}} \| [\Delta_q, j]\cdot \nabla B\|_{L^2}ds+\| B_0\|_{\dot{B}^{\frac32}_{2,1}}.\label{ener-7-1}
\end{eqnarray}
Again using Proposition \ref{Besov} (iii), we estimate
\[
\| B \cdot \nabla j \|_{\dot{B}^{\frac32}_{2,1}}\leq C\| B  \|_{\dot{B}^{\frac32}_{2,1}}\|\nabla j \|_{\dot{B}^{\frac32}_{2,1}},
\]
and
\[
\| B \cdot \nabla u \|_{\dot{B}^{\frac32}_{2,1}}\leq C\| B  \|_{\dot{B}^{\frac32}_{2,1}}\|\nabla u \|_{\dot{B}^{\frac32}_{2,1}}.
\]
Using commutator estimates of Proposition \ref{Besov} (iv), we have
\[
\| [\Delta_q, u]\cdot \nabla B\|_{L^2} \leq c_q 2^{-\frac{3q}{2}} \| u \|_{\dot{B}^{\frac52}_{2,1}}\|\nabla B \|_{\dot{B}^{\frac12}_{2,1}},
\]
and
\[
\| [\Delta_q, j]\cdot \nabla B\|_{L^2} \leq c_q 2^{-\frac{3q}{2}} \| j \|_{\dot{B}^{\frac52}_{2,1}}\|\nabla B \|_{\dot{B}^{\frac12}_{2,1}}.
\]
Then, we infer that
\begin{equation}
 \| B(t)\|_{\dot{B}^{\frac32}_{2,1}}+C_1\int_0^t\| B(s)\|_{\dot{B}^{\frac72}_{2,1}}ds \leq C\int_0^t\| B \|_{\dot{B}^{\frac32}_{2,1}}(\| u \|_{\dot{B}^{\frac52}_{2,1}}+\| B\|_{\dot{B}^{\frac72}_{2,1}})ds+\| B_0\|_{\dot{B}^{\frac32}_{2,1}} .\label{ener-8}
\end{equation}
Adding \eqref{ener-5} and \eqref{ener-8}, we obtain
\begin{eqnarray}
&&(\| u(t)\|_{\dot{B}^{\frac12}_{2,1}}+\| B(t)\|_{\dot{B}^{\frac12}_{2,1}}+\| B(t)\|_{\dot{B}^{\frac32}_{2,1}})+C_1 \int_0^t (\| u(t)\|_{\dot{B}^{\frac52}_{2,1}}+\| B(t)\|_{\dot{B}^{\frac52}_{2,1}}+\| B(t)\|_{\dot{B}^{\frac72}_{2,1}}) ds
\nonumber\\&&
 \leq C\int_0^t (\| u(s)\|_{\dot{B}^{\frac12}_{2,1}}+\| B(s)\|_{\dot{B}^{\frac12}_{2,1}}+\| B(s) \|_{\dot{B}^{\frac32}_{2,1}})( \| u(s)\|_{\dot{B}^{\frac52}_{2,1}}+\| B(s)\|_{\dot{B}^{\frac52}_{2,1}}+\| B(s)\|_{\dot{B}^{\frac72}_{2,1}})ds
\nonumber\\&&
 +(\| u_0\|_{\dot{B}^{\frac12}_{2,1}}+\| B_0\|_{\dot{B}^{\frac12}_{2,1}}+\| B_0\|_{\dot{B}^{\frac32}_{2,1}}).\label{ener-las}\end{eqnarray}
 By using the interpolation in Proposition \ref{Besov} (v)
 \[\| B_0\|_{\dot{B}^{\frac12}_{2,1}}\leq C \|B_0\|_{\dot{B}^{\frac32}_{2,1}}^{\frac13} \| B_0\|_{L^2}^{\frac23}\] and choosing $\epsilon$ to be so small, we make
 \bq\label{ini-small}
 C(\| u_0\|_{\dot{B}^{\frac12}_{2,1}}+\| B_0\|_{\dot{B}^{\frac12}_{2,1}}+\| B_0\|_{\dot{B}^{\frac32}_{2,1}})<\frac{C_1}{2}.
 \eq
 Suppose there exists a first time $t \in (0, T^{*})$ such that \[ C(\| u(t)\|_{\dot{B}^{\frac12}_{2,1}}+\| B(t)\|_{\dot{B}^{\frac12}_{2,1}}+\| B(t)\|_{\dot{B}^{\frac32}_{2,1}})\geq \frac{C_1}{2} .\]
 Then,
 \begin{eqnarray}
&& (\| u(t)\|_{\dot{B}^{\frac12}_{2,1}}+\| B(t)\|_{\dot{B}^{\frac12}_{2,1}}+\| B(t)\|_{\dot{B}^{\frac32}_{2,1}})+C_1\int_0^t( \| u(s)\|_{\dot{B}^{\frac52}_{2,1}}+\| B(s)\|_{\dot{B}^{\frac52}_{2,1}}+\| B(s)\|_{\dot{B}^{\frac72}_{2,1}})ds\nonumber\\&&\leq \frac{C_1}{2}\int_0^t( \| u(s)\|_{\dot{B}^{\frac52}_{2,1}}+\| B(s)\|_{\dot{B}^{\frac52}_{2,1}}+\| B(s)\|_{\dot{B}^{\frac72}_{2,1}})ds+(\| (u_0, B_0)\|_{\dot{B}^{\frac12}_{2,1}}+\| B_0\|_{\dot{B}^{\frac32}_{2,1}}).\label{ener-8-1}
\end{eqnarray}
 Absorbing the first term in the right hand sides of \eqref{ener-8-1} into the left hand side, we have
\begin{eqnarray}\label{ener-9}
&& (\| u(t)\|_{\dot{B}^{\frac12}_{2,1}}+\| B(t)\|_{\dot{B}^{\frac12}_{2,1}}+\| B(t)\|_{\dot{B}^{\frac32}_{2,1}})+\frac{C_1}{2}\int_0^t( \| u(s)\|_{\dot{B}^{\frac52}_{2,1}}+\| B(s)\|_{\dot{B}^{\frac52}_{2,1}}+\| B(s)\|_{\dot{B}^{\frac72}_{2,1}})ds\nonumber\\&&\leq (\|( u_0,\, B_0)\|_{\dot{B}^{\frac12}_{2,1}}+\| B_0\|_{\dot{B}^{\frac32}_{2,1}}).
\end{eqnarray}
It contradicts to the definition of $t$ and \eqref{ini-small}.\\
Hence,  we need to have
\[
\sup_{0\leq t \leq T^{*}} (\| u(t)\|_{\dot{B}^{\frac12}_{2,1}}+\| B(t)\|_{\dot{B}^{\frac12}_{2,1}}+\| B(t)\|_{\dot{B}^{\frac32}_{2,1}})\leq \frac{C_1}{2C},
\]
and this, combined with \eqref{ener-las}, implies
\[
\sup_{0\leq t \leq T^{*}} (\| u(t)\|_{\dot{B}^{\frac12}_{2,1}}+\| B(t)\|_{\dot{B}^{\frac12}_{2,1}}+\| B(t)\|_{\dot{B}^{\frac32}_{2,1}}) + \frac{C_1}{2}\int_0^{T^{*}} \| u(t)\|_{\dot{B}^{\frac52}_{2,1}}+\| B(t)\|_{\dot{B}^{\frac52}_{2,1}}+\| B(t)\|_{\dot{B}^{\frac72}_{2,1}}dt
\]
\[
\leq \| u_0\|_{\dot{B}^{\frac12}_{2,1}}+\| B_0\|_{\dot{B}^{\frac12}_{2,1}}+\| B_0\|_{\dot{B}^{\frac32}_{2,1}}.
\]
Using the interpolation and Sobolev embedding, we have
\[
\int_0^{T^{*}} \| u\|_{L^6}^4 dt \leq C\int_0^{T^{*}} \| u \|_{\dot{B}^{1}_{2,1}}^4 dt \leq C\int_0^{T^{*}}\| u\|_{\dot{B}^{\frac12}_{2,1}}^3\| u\|_{\dot{B}^{\frac52}_{2,1}}dt\leq C\sup_{0\leq t\leq T^{*}} \| u\|_{\dot{B}^{\frac12}_{2,1}}^3\int_0^{T^{*}}\| u\|_{\dot{B}^{\frac52}_{2,1}}dt,
\]
and
\[
\int_0^{T^{*}} \| \nabla B\|_{L^6}^4 dt \leq C\int_0^{T^{*}} \| B \|_{\dot{B}^{2}_{2,1}}^4 dt \leq C\int_0^{T^{*}}\| B\|_{\dot{B}^{\frac32}_{2,1}}^3\| u\|_{\dot{B}^{\frac72}_{2,1}}dt\leq C\sup_{0\leq t\leq T^{*}} \| B\|_{\dot{B}^{\frac32}_{2,1}}^3\int_0^{T^{*}}\| B\|_{\dot{B}^{\frac72}_{2,1}}dt.
\]
Hence by the blow-up criterion in Theorem \ref{thm1}, we conclude $T^{*}=\infty$.
\end{pfthm4}
 %==========
 %=======================================
%=================================================
%=================================================
\section*{Acknowledgments}
The research of the first author is supported partially by NRF Grant no. 2006-0093854. The research of the second author is supported partially by NRF Grant no. 2012-0000942. The authors would like to thank Prof. Jishan Fan for useful comment.

\end{document}